\def\theequation{\@arabic\c@equation}
\newcommand{\bbN}{{\mathbb{N}}}
\newcommand{\bbR}{{\mathbb{R}}}
\newcommand{\bbZ}{{\mathbb{Z}}}
\newcommand{\bbC}{{\mathbb{C}}}
\newcommand{\bbJ}{{\mathbb{J}}}
\newcommand{\R}{\mathbb{R}}
\newcommand{\N}{\mathbb{N}}
\newcommand{\cB}{{\mathcal B}}
\newcommand{\cG}{{\mathcal G}}
\newcommand{\cH}{{\mathcal H}}
\newcommand{\cK}{{\mathcal K}}
\newcommand{\cM}{{\mathcal M}}
\newcommand{\cR}{{\mathcal R}}
\newcommand{\no}{\nonumber}
\newcommand{\lb}{\label}
\newcommand{\f}{\frac}
\newcommand{\ol}{\overline}
\newcommand{\Oh}{O}
\newcommand{\oh}{o}
\newcommand{\loc}{\text{\rm{loc}}}
\newcommand{\ran}{\text{\rm{ran}}}
\newcommand{\dom}{\text{\rm{dom}}}
\newcommand{\slimes}{\text{\rm{l.i.m.}}}
\newcommand{\bi}{\bibitem}
\DeclareMathOperator{\tr}{tr}
\numberwithin{equation}{section}
\newtheorem{theorem}{Theorem}[section]
\newtheorem{lemma}[theorem]{Lemma}
\newtheorem{corollary}[theorem]{Corollary}
\newtheorem{definition}[theorem]{Definition}
\theoremstyle{remark}
\newtheorem{remark}[theorem]{Remark}
\begin{document}

\title[Riesz Basis Criterion for Non-Self-Adjoint Schr\"odinger Operators]{A 
Schauder and Riesz Basis Criterion for Non-Self-Adjoint Schr\"odinger Operators with 
Periodic and Antiperiodic Boundary Conditions}
\author[F.\ Gesztesy and V.\ Tkachenko]{Fritz
Gesztesy and Vadim Tkachenko}
\address{Department of Mathematics,
University of Missouri, Columbia, MO 65211, USA}
\email{fritz@math.missouri.edu}
\urladdr{http://www.math.missouri.edu/personnel/faculty/gesztesyf.html}
\address{Department of Mathematics,
Ben Gurion University of the Negev, Beer--Sheva 84105, Israel}
\email{tkachenk@math.bgu.ac.il}
\date{\today}
\subjclass[2000]{Primary: 34B30, 47B40, 47A10. Secondary: 34L05, 34L40.}
\keywords{Non-self-adjoint Hill operators, periodic and antiperiodic 
boundary conditions, Riesz basis.}

\begin{abstract} 
Under the assumption that $V \in L^2([0,\pi]; dx)$, 
we derive necessary and sufficient conditions for (non-self-adjoint) 
Schr\"odinger operators $-d^2/dx^2+V$ in $L^2([0,\pi]; dx)$ with periodic and antiperiodic boundary conditions to possess a Riesz basis of root vectors (i.e., eigenvectors and generalized eigenvectors spanning the range of the Riesz projection associated with the corresponding periodic and antiperiodic eigenvalues). 

We also discuss the case of a Schauder basis for periodic and antiperiodic 
Schr\"odinger operators $-d^2/dx^2+V$ in $L^p([0,\pi]; dx)$, $p \in (1,\infty)$.
\end{abstract}

\maketitle

\section{Introduction} \lb{s1}

We study (generally, non-self-adjoint) Schr\"odinger operators $H^{P}$ 
and $H^{AP}$ in the Hilbert space $L^2([0,\pi]; dx)$ associated with the 
differential expression
\begin{equation}
L = - \f{d^2}{dx^2} + V(x), \quad x\in [0,\pi],    \lb{1.1}
\end{equation}
and complex-valued potential $V$ satisfying
\begin{equation}
V \in L^2([0,\pi]; dx),    \lb{1.2}
\end{equation}
with {\it periodic} and {\it antiperiodic} boundary conditions defined by 
\begin{align}
& (H^Pf)(x)=(L f)(x),  \quad x \in [0,\pi],    \no \\ 
& \, f\in\dom\big(H^P\big)=\big\{g\in L^2([0,\pi]; dx) \,\big|\, g,g'\in AC([0,\pi]); \,
L g \in L^2([0,\pi]; dx);    \lb{1.3} \\
& \hspace*{7.45cm} g(\pi)=g(0), \, g'(\pi)=g'(0)\big\}, 
\no
\end{align}
and 
\begin{align}
& (H^{AP}f)(x)=(L f)(x),  \quad x \in [0,\pi],    \no \\
& \, f\in\dom\big(H^{AP}\big)=\big\{g\in L^2([0,\pi]; dx) \,\big|\, g,g'\in AC([0,\pi]); \,
L g \in L^2([0,\pi]; dx);    \no \\
& \hspace*{6.3cm} g(\pi)=-g(0), \, g'(\pi)=-g'(0)\big\},     \lb{1.4} 
\end{align}
respectively. In addition to the periodic and antiperiodic 
Schr\"odinger operators $H^P$ and $H^{AP}$ we also invoke the corresponding Dirichlet operator $H^D$ in $L^2([0,\pi]; dx)$ defined by 
\begin{align}
& (H^Df)(x)=(L f)(x),  \quad x \in [0,\pi],    \no \\
& f\in\dom\big(H^D\big)=\big\{g\in L^2([0,\pi]; dx) \,\big|\, g,g'\in AC([0,\pi]); \,
L g \in L^2([0,\pi]; dx);     \lb{1.5}  \\
& \hspace*{8.9cm} g(0)=g(\pi)=0\big\}.    \no
\end{align}

On occasion we shall also mention the Neumann operator $H^N$ 
in $L^2([0,\pi]; dx)$ defined by 
\begin{align}
& (H^Nf)(x)=(L f)(x),  \quad x \in [0,\pi],    \no \\
& f\in\dom\big(H^D\big)=\big\{g\in L^2([0,\pi]; dx) \,\big|\, g,g'\in AC([0,\pi]); \,
L g \in L^2([0,\pi]; dx);     \lb{1.5a}  \\
& \hspace*{8.75cm} g'(0)=g'(\pi)=0\big\}.    \no
\end{align}

One notes that $H^P$, $H^{AP}$, $H^D$, and $H^N$ are closed and densely defined; they are self-adjoint if and only if $V$ is real-valued a.e.\ on $[0,\pi]$. 
In particular, the boundary conditions in $H^P$, $H^{AP}$, $H^D$, and $H^N$ 
are all self-adjoint.

The Schr\"odinger equation associated with $L$ will be written in the form 
\begin{equation}
L \psi(\zeta,x) = \zeta^2 \psi(\zeta,x), \quad \zeta\in \bbC, \; x \in [0,\pi],    \lb{1.6} 
\end{equation}
with $\psi, \psi' \in AC([0,\pi])$ (the set of absolutely continuous functions on 
$[0,\pi]$). Moreover, we emphasize the notational choice in \eqref{1.6} which 
depicts $z = \zeta^2$ as the energy variable (i.e., $\zeta$ as momentum), which will be 
convenient in the following. 

The spectra of $H^P$, $H^{AP}$, and $H^D$ are well-known to be purely 
discrete (cf., e.g., \cite[Sect.\ 1.3]{Ma86}), that is, their resolvents are compact,
\begin{equation}
\big(H^{Q} -z I\big)^{-1} \in \cB_{\infty}\big(L^2([0,\pi]; dx)\big), 
\quad z \in \rho\big(H^{Q}\big),    \lb{1.7}
\end{equation}
where $Q$ stands for $P$, $AP$, and $D$. In fact, the resolvents in 
\eqref{1.7} are known to lie in the trace class $\cB_1\big(L^2([0,\pi]; dx)\big)$. 
In particular, we will use the notation
\begin{align}
& \sigma\big(H^P\big) = \sigma_d \big(H^P\big) 
= \{\lambda_0^+, \lambda_{2k}^+, \, \lambda_{2k}^-\}_{k\in\bbN},     
\lb{1.7a} \\
& \sigma\big(H^{AP}\big) = \sigma_d \big(H^{AP}\big) 
= \{\lambda_{2k+1}^+, \, \lambda_{2k+1}^-\}_{k\in\bbN_0}, 
\lb{1.7b} \\
& \sigma\big(H^D\big) = \sigma_d \big(H^D\big) 
= \{\mu_j\}_{j\in\bbN},    \lb{1.7c}
\end{align}
for the spectra of $H^P$, $H^{AP}$, and $H^D$ (where $\bbN_0 = \bbN \cup\{0\}$).

The asymptotic behavior of the eigenvalues of $H^P$, $H^{AP}$, and $H^D$ can 
be described as follows: Let $k \in \bbN$ and $k \geq k_0$, for some 
$k_0 \in \bbN$ sufficiently large, then there exists $R = R(k_0) > 0$ such 
that every open disk $D_{2k}(R)=\{z\in\bbC \,|\, |z-4k^2| < R\} \subset \bbC$, 
$k\in\bbN$, with center at $4k^2$ contains precisely two eigenvalues of 
$H^{P}$ denoted by 
\begin{equation}
\lambda_{2k}^+, \, \lambda_{2k}^-, \; k\in\bbN,  \lb{1.8}
\end{equation}
and every open disk 
$D_{2k+1}(R)=\{z\in\bbC \,|\, |z-(2k+1)^2| < R\} \subset \bbC$, 
$k\in\bbN$, with center at $(2k+1)^2$ contains precisely two eigenvalues 
of $H^{AP}$ denoted by 
\begin{equation}
\lambda_{2k+1}^+, \, \lambda_{2k+1}^-, \; k\in\bbN.  \lb{1.9}
\end{equation}
In addition, every open disk 
$D_{j}(R)=\{z\in\bbC \,|\, |z-j^2| < R\} \subset \bbC$, 
$j\in\bbN$, with center at $j^2$ contains precisely one eigenvalue 
of $H^{D}$ (resp., $H^N$) denoted by 
\begin{equation}
\mu_{j} \, \text{ (resp., $\nu_j$), } \; j\in\bbN.    \lb{1.10}
\end{equation}

Next, we recall a few facts regarding the eigenvalues of a compact, linear 
operator  $T \in \cB_{\infty}(\cH)$ in a separable complex Hilbert space $\cH$: 
The {\it geometric multiplicity}, $m_g(\lambda_0,T)$, of an eigenvalue 
$\lambda_0 \in \sigma_p (T)$ of $T$ is given by 
\begin{equation}
m_g(\lambda_0,T) = \dim(\ker(T - \lambda_0 I_{\cH})),    \lb{1.11} 
\end{equation}
with $\ker(T - \lambda_0 I_{\cH})$ a closed linear subspace in $\cH$. 

The set of all {\it root vectors} of $T$ (i.e., eigenvectors and generalized eigenvectors, or associated eigenvectors) corresponding to 
$\lambda_0 \in \sigma_p (T)$ is given by 
\begin{equation} 
\cR(\lambda_0,T) = \big\{f\in\cH\,\big|\, (T - \lambda_0 I_{\cH})^k f = 0 \; 
\text{for some $k\in\bbN$}\big\}.     \lb{1.12}
\end{equation}
For $\lambda_0 \in \sigma_p (T) \backslash \{0\}$, the set 
$\cR(\lambda_0,T)$ is a closed linear subspace of $\cH$ whose dimension 
equals the {\it algebraic multiplicity}, $m_a(\lambda_0,T)$, of $\lambda_0$, 
\begin{equation}
m_a(\lambda_0,T) = \dim\big(\big\{f\in\cH\,\big|\, (T - \lambda_0 I_{\cH})^k f = 0 
\; \text{for some $k\in\bbN$}\big\}\big)<\infty.   \lb{1.13}
\end{equation}
One has  
\begin{equation} 
m_g(\lambda_0,T) \leq m_a(\lambda_0,T), \quad 
\lambda_0 \in \sigma_p (T) \backslash \{0\}.    \lb{1.14}
\end{equation}

Moreover, for $\lambda_0 \in \sigma_p (T) \backslash \{0\}$ one can introduce 
the Riesz projection, $P(\lambda_0,T)$ of $T$ corresponding to $\lambda_0$, by 
\begin{equation}
P(\lambda_0,T)=-\f{1}{2\pi i}\ointctrclockwise_{C(\lambda_0; \varepsilon) }
d\zeta \, (T - \zeta I_{\cH})^{-1}, \lb{1.15}
\end{equation}
with $C(\lambda_0; \varepsilon) $ a counterclockwise oriented circle centered at 
$\lambda_0$ with sufficiently small radius $\varepsilon>0$, such that the closed 
disk with center $\lambda_0$ and radius $\varepsilon$ excludes   
$\sigma(T)\backslash\{\lambda_0\}$. In this case one has (cf.\ \cite[Sect.\ II.1]{GGK90}, \cite{GK60}, \cite[Sects.\ I.1, I.2]{GK69})
\begin{equation}
m_a(\lambda_0,T) = \dim(\ran(P(\lambda_0,T)) < \infty, \quad 
\cR(\lambda_0,T) = \ran(P(\lambda_0,T)).      \lb{1.16}
\end{equation}

We are particularly interested in the case where $A$ is a densely defined, 
closed, linear operator in $\cH$ whose resolvent is compact, that is, 
\begin{equation}
(A - z I_{\cH})^{-1} \in \cB_{\infty}(\cH), \quad z \in \rho (A).    \lb{1.17}
\end{equation}
Via the spectral mapping theorem all eigenvalues of $A$ then correspond to nonzero eigenvalues of its compact resolvent $(A - z I_{\cH})^{-1}$, 
$z \in \rho (A)$, and vice versa. Hence, we use the same notions of root 
vectors, root spaces, and geometric and algebraic multiplicities associated with the 
eigenvalues of $A$. Moreover, in the case where 
\begin{equation}
(A - z I_{\cH})^{-1} \in \cB_1 (\cH), \quad z \in \rho (A),    \lb{1.17a}
\end{equation}
we recall the fact that (cf.\ \cite{GW95})
\begin{align}
\begin{split} 
& {\det}_{\cH} \big(I_{\cH} - (z-z_0) (A - z I_{\cH})^{-1}\big) 
= \prod_{j\in \bbJ} \bigg(\f{\lambda_j - z}{\lambda_j - z_0}
\bigg)^{m_a(\lambda_j,A)}   \\
& \quad \underset{z \to \lambda_k}{=} 
(\lambda_k - z)^{m_a(\lambda_k,A)} [C_k + \Oh(z - \lambda_k)],  
\quad C_k \neq 0, \; k \in \bbJ,     \lb{1.17b}
\end{split} 
\end{align}
where $\bbJ \subseteq \bbN$ denotes an appropriate index set such that 
$\sigma (A) = \{\lambda_j\}_{j \in \bbJ}$ with $\lambda_j \neq \lambda_{j'}$, 
$j \neq j'$, $j, j' \in \bbJ$.
 \medskip
 
We recall that a system of vectors $\{g_k\}_{k\in\bbN} \subset \cH$ is called 
{\it complete} in $\cH$ if $\big(\{g_k\}_{k\in\bbN}\big)^\bot = \{0\}$ (equivalently, 
if $\ol{\text{lin.\,span} \, \{g_k\}_{k \in \bbN}} = \cH$). Moreover, 
a system $\{h_k\}_{k \in \bbN} \subset \cH$ is called {\it minimal} if no vector 
$h_{k_0} \in \{h_k\}_{k \in \bbN}$ satisfies 
$h_{k_0} \in \ol{\text{lin.\,span} \, \{h_k\}_{k \in \bbN\backslash \{k_0\}}}$. The 
system $\{f_k\}_{k\in\bbN} \subset \cH$ is called a {\it Schauder basis} in $\cH$
\begin{align}
\begin{split}
& \text{if for each $f \in \cH$, there exist unique $c_k = c_k(f) \in\bbC$, $k \in \bbN$, 
such that}     \lb{1.17c} \\
& \quad \text{$f = \sum_{k \in \bbN} c_k(f) f_k$ converges in the norm of $\cH$.} 
\end{split} 
\end{align} 
Two systems 
$\{g_k\}_{k\in\bbN} \subset \cH$ and $\{h_k\}_{k \in \bbN} \subset \cH$ are 
called {\it biorthogonal} if 
\begin{equation}
(g_j, h_k)_{\cH} = \delta_{j,k}, \quad j,k \in \bbN.    \lb{1.17d}
\end{equation}
We also recall (cf.\ \cite[Sect.\ VI.1]{GK69}) that if 
$\{g_k\}_{k\in\bbN} \subset \cH$ is minimal, then a biorthogonal system 
$\{h_k\}_{k \in \bbN} \subset \cH$ exists. Moreover, if $\{g_k\}_{k\in\bbN}$ is minimal, then the biorthogonal system $\{h_k\}_{k \in \bbN}$ in $\cH$ is 
uniquely determined if and only if $\{g_k\}_{k\in\bbN}$ is complete in $\cH$. 

Next, we turn to the definition of a Riesz basis in a Hilbert space due to Bari (cf., e.g., \cite[Sect.\ VI.2]{GK69}, \cite[Sect.\ 1.8]{Yo01}):

\begin{definition} \lb{d1.1}
Let $\cH$ be a complex separable Hilbert space and $f_k \in \cH$, $k\in \bbN$. Then the system 
$\{f_k\}_{k\in\bbN}$ is called a Riesz basis in $\cH$ if there exists an operator 
$A \in \cB(\cH)$, with $A^{-1} \in \cB(\cH)$, and an orthonormal system 
$\{e_k\}_{k\in\bbN}$ in $\cH$, such that
\begin{equation}
f_k = A e_k, \quad k \in \bbN.    \lb{1.18}
\end{equation} 
\end{definition}

Given these preparations, we can now formulate the principal result of this paper:

\begin{theorem} \lb{t1.2}
Assume $V \in L^2([0,\pi]; dx)$, then the following results hold: \\ 
$(i)$ The system of root vectors of $H^P$ contains a 
Riesz basis in $L^2([0,\pi]; dx)$ if and only if 
\begin{equation}
\sup_{\substack{k \in \bbN, \\ \lambda_{2k}^+ \neq \lambda_{2k}^-}} 
\f{|\mu_{2k} - \lambda_{2k}^{\pm}|}{|\lambda_{2k}^+ - \lambda_{2k}^-|} < \infty.   
\lb{1.19}
\end{equation}
$(ii)$ The system of root vectors of $H^{AP}$ contains a 
Riesz basis in $L^2([0,\pi]; dx)$ if and only if 
\begin{equation}
\sup_{\substack{k \in \bbN, \\ \lambda_{2k+1}^+ \neq \lambda_{2k+1}^-}} 
\f{|\mu_{2k+1} - \lambda_{2k+1}^{\pm}|}{|\lambda_{2k+1}^+ - \lambda_{2k+1}^-|} < \infty.  \lb{1.20}
\end{equation}
\end{theorem}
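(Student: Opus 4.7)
The plan is as follows. The asymptotic spectral localization recalled in \eqref{1.8} allows me to form, for all $k \ge k_0$, the rank-two Riesz projections
\begin{equation*}
P_{2k} = -\frac{1}{2\pi i}\ointctrclockwise_{\partial D_{2k}(R)} d\zeta\, \big(H^P - \zeta I\big)^{-1}
\end{equation*}
onto the root subspace attached to the pair $\{\lambda_{2k}^+, \lambda_{2k}^-\}$, together with a finite-rank projection $P_\ast$ for the remaining eigenvalues. A resolvent-trace-class argument in the spirit of Dunford--Schwartz and Markus yields completeness of the root system, hence a strongly convergent decomposition $I = P_\ast + \sum_{k\ge k_0} P_{2k}$. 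The candidate Riesz basis is then any choice of basis in each range $\operatorname{ran}(P_{2k})$ and in $\operatorname{ran}(P_\ast)$.

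Next I would invoke the Bari--Markus theorem for sequences of finite-dimensional projections (see, e.g., \cite[Sect.~VI.5]{GK69}). Denoting by $P_{2k}^0$ the orthogonal spectral projection of $-d^2/dx^2$ with periodic boundary conditions onto $\operatorname{span}\{e^{\pm 2ikx}\}$, a standard second-order resolvent expansion exploiting $V \in L^2([0,\pi]; dx)$ gives the quadratic closeness
\begin{equation*}
\sum_{k\ge k_0} \big\|P_{2k} - P_{2k}^0\big\|_{\cB(L^2)}^2 < \infty.
\end{equation*}
Combined with completeness, the Bari--Markus criterion reduces the Riesz basis property to the single uniform bound
\begin{equation*}
\sup_{k\ge k_0} \|P_{2k}\|_{\cB(L^2)} < \infty. \tag{$\ast$}
\end{equation*}

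It then remains to evaluate $\|P_{2k}\|$ in Floquet-theoretic terms and to recognize the ratio in \eqref{1.19}. Writing $c(z,\cdot), s(z,\cdot)$ for the fundamental solutions with $c(z,0)=s'(z,0)=1$, $c'(z,0)=s(z,0)=0$, and $\Delta(z) = [c(z,\pi)+s'(z,\pi)]/2$ for the Hill discriminant, a variation-of-parameters computation writes the periodic Green kernel as a regular piece plus a singular piece proportional to $[2-2\Delta(z)]^{-1}$ times the monodromy-matrix data
\begin{equation*}
M(z) - I = \begin{pmatrix} c(z,\pi)-1 & s(z,\pi) \\ c'(z,\pi) & s'(z,\pi)-1 \end{pmatrix}.
\end{equation*}
The zeros of $2-2\Delta$ inside $D_{2k}(R)$ are exactly $\lambda_{2k}^\pm$, while $\mu_{2k}$ is the unique zero of the off-diagonal entry $s(\cdot,\pi)$ inside $D_{2k}(R)$. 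Using the Hadamard factorizations
\begin{equation*}
s(z,\pi) = \pi\prod_{j\in\bbN}\frac{\mu_j - z}{j^2}, \qquad 1-\Delta(z)^2 = C_0\prod_{j}\frac{(\lambda_j^+-z)(\lambda_j^--z)}{j^4},
\end{equation*}
both of which produce uniformly bounded, nonvanishing prefactors once the ``local'' factor indexed by $j = 2k$ is pulled out, I expect to arrive at the two-sided comparison
\begin{equation*}
\|P_{2k}\|_{\cB(L^2)} \;\asymp\; 1 + \frac{|\mu_{2k}-\lambda_{2k}^{\pm}|}{|\lambda_{2k}^+-\lambda_{2k}^-|}, \quad \lambda_{2k}^+\ne\lambda_{2k}^-,
\end{equation*}
which when combined with $(\ast)$ proves Theorem \ref{t1.2}(i). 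Part (ii) follows by the same scheme with disks centered at $(2k+1)^2$ and with $\mu_{2k+1}$ replacing $\mu_{2k}$.

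The main obstacle will be the sharp two-sided estimate just displayed. The upper bound requires uniform control of the resolvent kernel on $\partial D_{2k}(R)$ even when $\lambda_{2k}^+$ and $\lambda_{2k}^-$ nearly collide and the denominator $2-2\Delta(z)$ becomes small, while the lower bound requires test functions that activate the off-diagonal entry $s(z,\pi)$ in proportion to the distance from $\mu_{2k}$ to $\{\lambda_{2k}^+,\lambda_{2k}^-\}$. The borderline case $\lambda_{2k}^+=\lambda_{2k}^-$, excluded from the supremum in \eqref{1.19}, must be handled separately: either $\mu_{2k}$ coincides with $\lambda_{2k}^\pm$ and $P_{2k}$ is already an orthogonal rank-two projection, or the root subspace carries a nontrivial Jordan block whose contribution to $\|P_{2k}\|$ must be verified directly to be a harmless finite perturbation.
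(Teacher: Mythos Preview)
Your plan contains a genuine gap at the reduction step. The uniform bound $(\ast)$ on the rank-two block projections $P_{2k}$ is \emph{always} satisfied for $V\in L^2([0,\pi];dx)$: the quadratic closeness $\sum_k\|P_{2k}-P_{2k}^0\|^2<\infty$ that you correctly quote already forces $\|P_{2k}-P_{2k}^0\|\to 0$ and hence $\sup_k\|P_{2k}\|<\infty$, regardless of whether \eqref{1.19} holds. What $(\ast)$ together with Bari--Markus delivers is only a Riesz basis of two-dimensional \emph{subspaces} (a Riesz basis ``with brackets''), and indeed any orthonormal pair in each $\ran(P_{2k})$ then produces a Riesz basis of vectors. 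But the theorem asks for a Riesz basis of \emph{root vectors}; when $\lambda_{2k}^+\neq\lambda_{2k}^-$ these are the two eigenvectors, and nothing in $(\ast)$ prevents them from becoming nearly parallel inside $\ran(P_{2k})$. Your asserted two-sided comparison $\|P_{2k}\|\asymp 1+|\mu_{2k}-\lambda_{2k}^{\pm}|/|\lambda_{2k}^+-\lambda_{2k}^-|$ is therefore false: the left side is always $\asymp 1$, while the right side can blow up.

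The object that must be estimated is not $\|P_{2k}\|$ but the norm of each \emph{rank-one} spectral projection $P_{2k}^{\pm}$ onto a single eigenvector (for $k$ with $\lambda_{2k}^+\neq\lambda_{2k}^-$), equivalently the product $\|\phi_k^{\pm}\|\,\|\chi_k^{\pm}\|$ for the biorthogonal pair, equivalently the angle between $\phi_k^+$ and $\phi_k^-$ inside the block. The paper attacks this directly: it builds explicit biorthonormal root systems $\{\phi_k^{\pm}\}$, $\{\chi_k^{\pm}\}$ from the residues of $(H^P-\zeta^2 I)^{-1}$, expresses each $\phi_k^{\pm}$ as a combination of $c(\xi_{2k}^{\pm},\cdot)$ and $\xi_{2k}^{\pm}s(\xi_{2k}^{\pm},\cdot)$ with coefficients involving $\xi_{2k}^{\pm}s(\xi_{2k}^{\pm},\pi)/u_+^{\bullet}(\xi_{2k}^{\pm})$ and $u_-(\xi_{2k}^{\pm})/u_+^{\bullet}(\xi_{2k}^{\pm})$, and then shows---via a Lagrange--Hermite interpolation of $\zeta u_-(\zeta)$ at the Dirichlet zeros $\zeta_{2k}$---that boundedness of these coefficients is equivalent to \eqref{1.19}. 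Your Hadamard-factorization heuristic for $s(\cdot,\pi)$ and $1-\Delta^2$ is pointed at the right ingredients; it simply has to be applied one level down, to the individual eigenprojections rather than to the two-dimensional block $P_{2k}$.
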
 

Here $\sup_{k \in \bbN, \, \lambda_{j}^+ \neq \lambda_{j}^-}$ signifies that all subscripts $j\in\bbN$ in \eqref{1.19} and \eqref{1.20} for which 
$\lambda_{j}^+$ and $\lambda_{j}^-$ coincide are simply excluded from 
the supremum considered. 

\begin{remark} \lb{r1.3} 
$(i)$ It is remarkable that only simple periodic (resp., 
antiperiodic) eigenvalues enter in the necessary and sufficient conditions 
\eqref{1.19} (resp., \eqref{1.20}) for the existence of a Riesz basis of root vectors 
of $H^P$ (resp., $H^{AP}$). The 
multiple periodic (resp., antiperiodic) eigenvalues play no role in deciding whether or not the system of root vectors of $H^P$ (resp., $H^{AP}$) constitutes 
a Riesz basis in $L^2([0,\pi]; dx)$. This leads to an interesting class of examples 
exhibiting a Riesz basis of root vectors as discussed in Remark \ref{r6.4}. \\
$(ii)$ In addition, it is remarkable that only every other Dirichlet eigenvalue (i.e., 
half the Dirichlet spectrum) enter the criterion \eqref{1.19} (resp., \eqref{1.20}). \\
$(iii)$ Of course, an analogous result holds with the Dirichlet spectrum 
$\sigma\big(H^D\big) = \{\mu_j\}_{j\in\bbN}$ replaced by the Neumann spectrum
$\sigma\big(H^N\big)= \{\nu_j\}_{j\in\bbN_0}$. 
\end{remark}

Next, we briefly turn to the closed Schr\"odinger operators $H^P$, $H^{AP}$, and 
$H^D$ generated by the differential expression $L = - (d^2/dx^2) + V(x)$ in $L^p([0,1]; dx)$, 
replacing the Hilbert space $L^2([0,1]; dx)$ by the Banach space 
$L^p([0,1]; dx)$, $p \in (1,\infty)$, in \eqref{1.3}, \eqref{1.4}, and \eqref{1.5}. (For notational simplicity we keep the same symbols for these three operators as well as for their eigenvalues in the $L^p$-context.)

\begin{theorem} \lb{t1.4}
Assume $V \in L^2([0,\pi]; dx)$ and let $p \in (1,\infty)$. Then the following results hold: \\ 
$(i)$ The system of root vectors of $H^P$ contains a 
Schauder basis in $L^p([0,\pi]; dx)$ if and only if \eqref{1.19} holds. \\
$(ii)$ The system of root vectors of $H^{AP}$ contains a 
Schauder basis in $L^p([0,\pi]; dx)$ if and only if \eqref{1.20} holds. 
\end{theorem}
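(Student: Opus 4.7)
The plan is to reduce Theorem \ref{t1.4} to Theorem \ref{t1.2} by identifying the existence of a Schauder basis in $L^p$ with the uniform $L^p$-boundedness of the partial-sum operators built from the cluster Riesz projectors of $H^P$ (resp., $H^{AP}$), and then to show that this uniform $L^p$-boundedness is equivalent, through a perturbation of the free operator, to the corresponding uniform $L^2$-boundedness which Theorem \ref{t1.2} already characterizes by \eqref{1.19} (resp., \eqref{1.20}).

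I would focus on the periodic case, the antiperiodic one being analogous after a phase shift. As a first step, for $k \geq k_0$ I group the eigenvalues $\{\lambda_{2k}^\pm\}$ inside the disks $D_{2k}(R)$ and denote by $P_{2k}$ the corresponding rank-one-or-two Riesz projector of $H^P$, with $Q$ absorbing the finitely many low-index exceptional projectors. Root vectors of $H^P$ are continuous on $[0,\pi]$ (elliptic regularity for $V\in L^2([0,\pi];dx)$), so they lie in every $L^p([0,\pi]; dx)$; in particular each $P_{2k}$ and $Q$ extends to a bounded operator on $L^p([0,\pi]; dx)$, and completeness of the root vectors in $L^p$ follows from their completeness in $L^2$ together with continuity and density. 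Consequently, the existence of a Schauder basis in $L^p$ ordered by increasing cluster index is equivalent to
\begin{equation*}
\sup_{N \geq k_0} \bigg\| Q + \sum_{k=k_0}^{N} P_{2k} \bigg\|_{\cB(L^p([0,\pi]; dx))} < \infty.
\end{equation*}

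The second step is a perturbative comparison with the free operator $H_0^P = -d^2/dx^2$ (periodic boundary conditions), whose cluster projectors $P_{2k}^0$ are the rank-two Fourier projectors onto $\operatorname{span}\{e^{\pm 2ikx}\}$. Iterating once the second resolvent identity on the contour $C(4k^2; R)$ and combining it with standard $\Oh(k^{-1})$ estimates for $(H_0^P - zI)^{-1}$ on these circles yields, for every $p \in (1,\infty)$,
\begin{equation*}
\|P_{2k} - P_{2k}^0\|_{\cB(L^p([0,\pi]; dx))} = \oh(1) \text{ as } k \to \infty, \quad \sum_{k \geq k_0} \|P_{2k} - P_{2k}^0\|_{\cB(L^p([0,\pi]; dx))} < \infty.
\end{equation*}
Since the partial sums $\sum_{k \leq N} P_{2k}^0$ are the symmetric Dirichlet partial sums of the Fourier series in $L^p([0,\pi]; dx)$, uniformly bounded for $p \in (1,\infty)$ by the classical M.\ Riesz theorem on conjugate functions, the uniform $L^p$-boundedness of the partial sums of $\{P_{2k}\}$ reduces to the uniform $L^p$-boundedness of the individual cluster projectors $\{P_{2k}\}_{k \geq k_0}$.

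The main obstacle is therefore the $L^p$ versus $L^2$ equivalence
\begin{equation*}
\sup_{k \geq k_0} \|P_{2k}\|_{\cB(L^p([0,\pi]; dx))} < \infty \iff \sup_{k \geq k_0} \|P_{2k}\|_{\cB(L^2([0,\pi]; dx))} < \infty,
\end{equation*}
which I would attack via the explicit rank-at-most-two integral-kernel representation of $P_{2k}$ built from normalized Floquet solutions $\psi(\zeta, \cdot)$ of $L\psi = \zeta^2 \psi$. Uniform $L^\infty([0,\pi])$-bounds on such solutions for $\zeta$ on the preimage of $C(4k^2; R)$ under $z = \zeta^2$, $k \geq k_0$, follow from the Volterra integral equation for $\psi$ and the hypothesis $V \in L^2([0,\pi]; dx)$. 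With these bounds, a direct Schur-type estimate on the kernel yields $\|P_{2k}\|_{\cB(L^p)} \lesssim \|P_{2k}\|_{\cB(L^2)}$ for every $p\in(1,\infty)$, while the converse direction is obtained from duality combined with the uniformly bounded rank of $P_{2k}$. Once the comparability is in hand, Theorem \ref{t1.2} transports condition \eqref{1.19} into uniform $L^p$-boundedness of $\{P_{2k}\}$, and the previous two steps assemble into Theorem \ref{t1.4}\,(i); part \,(ii) follows by the identical argument after relabeling indices.
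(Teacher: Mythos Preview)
Your reduction has a genuine gap at the very first step. You claim that the existence of a Schauder basis of root vectors, ordered by increasing cluster index, is equivalent to
\[
\sup_{N\geq k_0}\Big\|Q+\sum_{k=k_0}^{N}P_{2k}\Big\|_{\cB(L^p)}<\infty,
\]
and then proceed to study only the rank-two cluster projectors $P_{2k}$. But this displayed condition characterizes only a \emph{Riesz (resp.\ Schauder) basis with brackets}: it tests partial sums at cluster boundaries, not partial sums that stop after one of the two root vectors inside a cluster. For a genuine Schauder basis of individual root vectors one needs in addition that the rank-one projectors $f\mapsto(\chi_k^{\pm},f)\,\phi_k^{\pm}$ be uniformly bounded, and this is strictly stronger. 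Concretely, in a two-dimensional range one can have $\|P_{2k}\|=1$ while the two eigenvectors $\phi_k^{+},\phi_k^{-}$ are nearly parallel, forcing $\|\chi_k^{\pm}\|\,\|\phi_k^{\pm}\|\to\infty$; this is exactly the phenomenon that condition \eqref{1.19} detects. In fact, for Birkhoff-regular problems (periodic included) the cluster projectors are \emph{always} uniformly bounded and one always has a basis with brackets; so your conclusion ``Theorem~\ref{t1.2} transports \eqref{1.19} into uniform $L^p$-boundedness of $\{P_{2k}\}$'' cannot be correct, since the right-hand side holds unconditionally while \eqref{1.19} does not. Theorem~\ref{t1.2} nowhere asserts that $\sup_k\|P_{2k}\|_{\cB(L^2)}<\infty$ is equivalent to \eqref{1.19}; it is a statement about the individual root vectors $\phi_k^{\pm}$, not about the two-dimensional Riesz projections.

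For comparison, the paper never passes through cluster projectors. It works directly with the explicit root system $\{\phi_k^{\pm}\}$ and its biorthogonal $\{\chi_k^{\pm}\}$ constructed in Section~\ref{s3}, writes each $\phi_k^{\pm}$, $\chi_k^{\pm}$ as a bounded-coefficient linear combination of $\cos(2kx)$, $\sin(2kx)$, $x\cos(2kx)$, $x\sin(2kx)$ plus $O(k^{-1})$ remainders (the boundedness of these coefficients being \emph{equivalent} to \eqref{1.19} through \eqref{4.35}--\eqref{4.41}), and then for $1<p\le 2$ uses the Hausdorff--Young inequality to get $L^p$-convergence of the expansion \eqref{6.z}; the range $2\le p<\infty$ is handled by duality with $(H^P)^*$. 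The place where \eqref{1.19} actually enters is precisely the control of the individual coefficients $[\xi_{2k}^{\pm}s(\xi_{2k}^{\pm},\pi)/u_+^{\bullet}(\xi_{2k}^{\pm})]^{1/2}$ and $u_-(\xi_{2k}^{\pm})/u_+^{\bullet}(\xi_{2k}^{\pm})$, i.e., the splitting of each two-dimensional eigenspace into two well-separated one-dimensional pieces --- exactly the step your cluster-projector reduction bypasses.
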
 

\begin{corollary} \lb{c1.5}
Assume $V \in L^2([0,\pi]; dx)$. Then the system of root vectors of $H^P$ 
$($resp., $H^{AP}$$)$ contains a Riesz basis in $L^2([0,\pi]; dx)$ if and only if 
the system of root vectors of $H^P$ $($resp., $H^{AP}$$)$ contains a Schauder 
basis in $L^2([0,\pi]; dx)$. 
\end{corollary}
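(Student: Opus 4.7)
The plan is to deduce Corollary \ref{c1.5} as an immediate consequence of Theorems \ref{t1.2} and \ref{t1.4} specialized to $p=2$. Both theorems characterize the existence of a basis---Riesz in Theorem \ref{t1.2}, Schauder in Theorem \ref{t1.4}---of root vectors of $H^P$ (resp., $H^{AP}$) in $L^2([0,\pi]; dx)$ by the \emph{same} quantitative condition \eqref{1.19} (resp., \eqref{1.20}), so the equivalence is essentially tautological once the two theorems are in hand.

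First I would dispose of the easy direction: every Riesz basis in a separable Hilbert space is automatically a Schauder basis. If $\{f_k\}_{k \in \bbN} = \{A e_k\}_{k \in \bbN}$ with $A \in \cB(L^2([0,\pi]; dx))$ boundedly invertible and $\{e_k\}_{k\in\bbN}$ orthonormal, then for each $f \in L^2([0,\pi]; dx)$ one expands $A^{-1} f = \sum_{k \in \bbN} (A^{-1} f, e_k)_{L^2} e_k$ and applies $A$ to obtain the unique norm-convergent expansion $f = \sum_{k \in \bbN} (A^{-1} f, e_k)_{L^2} f_k$, with uniqueness of the coefficients following from completeness and minimality of $\{f_k\}_{k\in\bbN}$. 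Consequently, if the root vectors of $H^P$ (resp., $H^{AP}$) contain a Riesz basis in $L^2([0,\pi]; dx)$, they also contain a Schauder basis in $L^2([0,\pi]; dx)$.

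For the converse I would simply chain the two theorems through their common criterion. Suppose the root vectors of $H^P$ contain a Schauder basis in $L^2([0,\pi]; dx)$. Theorem \ref{t1.4}(i) with $p=2$ then forces condition \eqref{1.19}, whereupon Theorem \ref{t1.2}(i) supplies a Riesz basis of root vectors of $H^P$ in $L^2([0,\pi]; dx)$. The argument for $H^{AP}$ is identical, with \eqref{1.19} replaced by \eqref{1.20} and parts $(ii)$ of both theorems invoked in place of parts $(i)$.

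No substantive obstacle stands in the way of the corollary itself: all of the analytic content is compressed into the two underlying theorems. The only point worth flagging is that Theorem \ref{t1.4} is stated for the full range $p \in (1,\infty)$, so in particular the endpoint $p=2$ is covered, and this is precisely what allows the Schauder basis criterion and the Riesz basis criterion to be matched through the single condition \eqref{1.19}--\eqref{1.20}.
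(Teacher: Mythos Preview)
Your proposal is correct and matches the paper's intent: Corollary \ref{c1.5} is stated without a separate proof precisely because it is an immediate consequence of combining Theorems \ref{t1.2} and \ref{t1.4} at $p=2$, both of which characterize the respective basis property by the identical condition \eqref{1.19} (resp., \eqref{1.20}). Your additional remark that every Riesz basis is automatically a Schauder basis is correct and harmless, though strictly speaking even that direction already follows from the two theorems.
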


Since the proofs for the cases of $H^P$ and $H^{AP}$ are completely 
analogous, we will focus exclusively on the case of $H^P$ in the remainder 
of this paper.

To properly place Theorem \ref{t1.2} in perspective, 
we also recall the following more general family of densely defined, closed, linear operators $H(t)$, $t\in [0, 2\pi)$ in $L^2([0,\pi]; dx)$ given by
\begin{align}
& (H(t)f)(x)=(L f)(x),  \quad x \in [0,\pi],     \no \\
& \, f\in\dom(H(t))=\big\{g\in L^2([0,\pi]; dx) \,\big|\, g,g'\in AC([0,\pi]); \,
L g \in L^2([0,\pi]; dx);     \no \\
& \hspace*{4cm} g(\pi)=e^{it}g(0), \, g'(\pi)=e^{it}g'(0)\big\}, \quad 
t\in [0, 2\pi).       \lb{1.22}
\end{align}
The family $\{H(t)\}_{t\in [0, 2\pi)}$ is of fundamental importance in the spectral theory of periodic Schr\"odinger operators in $L^2(\bbR; dx)$ (see below), and again, $H(t)$, $t\in [0,2\pi)$, is self-adjoint if and only if $V$ is real-valued 
a.e.\ on $[0,\pi]$. Moreover, 
one recalls that for $t \in (0,2 \pi)\backslash \{\pi\}$, that is, for all boundary conditions {\it different} from periodic and antiperiodic ones, the $t$-dependent 
boundary conditions in \eqref{1.22} are strongly Birkhoff regular and hence 
$H(t)$, $t \in (0,2 \pi)\backslash \{\pi\}$, possesses a Riesz basis of root vectors  
(cf.\ \cite{Ke64}, \cite{Mi62}). For this reason we deal exclusively with the special 
periodic ($t=0$) and antiperiodic ($t=\pi$) cases in this paper which are Birkhoff 
regular, but not strongly regular. 

Next, to provide a further perspective, we briefly describe the connection 
between the family of operators $H(t)$, $t \in [0, 2\pi)$, in $L^2([0,\pi]; dx)$, 
and non-self-adjoint Hill operators $H$ in $L^2(\bbR; dx)$, that is, periodic 
Schr\"odinger operators on $\bbR$ defined by
\begin{align}
\begin{split}
& (Hf)(x)=(L f)(x),  \quad x\in\bbR, \\
& f\in\dom(H)=\{g\in L^2(\bbR) \,|\, g,g'\in AC_{\loc}(\bbR); \,
L g \in L^2(\bbR)\}.   \lb{1.25}
\end{split}
\end{align}
with $L = - \f{d^2}{dx^2} + V(x)$, $x\in\bbR$, and $V\in L^2_{\loc}(\bbR)$ 
periodic on $\bbR$ with period $\pi$, 
\begin{equation}
V(x+\pi)=V(x) \, \text{ for a.e.\ $x\in\bbR$.}     \lb{1.26}
\end{equation}

The connection between $H$ in $L^2(\bbR; dx)$ and that of the family 
$H(t)$, $t\in [0, 2\pi)$ in $L^2([0,\pi]; dx)$ is then explicitly given by 
\begin{equation}
\cG H {\cG}^{-1} = \f{1}{2\pi}\int^{\oplus}_{[0,2\pi]} dt\, H(t).     \lb{1.27}
\end{equation}
Here we introduced the Gelfand transform (\cite{Ge50} ) defined by 
\begin{equation}
\cG\colon \begin{cases} L^2(\bbR) \to \cK  \\
\hspace*{.75cm} f \mapsto (\cG f)(x,t)=F(x,t)=\slimes_{N \uparrow \infty}
\sum_{n=-N}^N  f(x+n \pi) e^{-int},   \\[1mm]
\end{cases}    \lb{1.28}  
\end{equation}
where $\cK$ abbreviates the direct integral of Hilbert spaces with constant 
fibers $L^2([0,2\pi])$, 
\begin{equation}
\cK=\f{1}{2\pi}\int^{\oplus}_{[0,2\pi]} dt\, L^2([0,2\pi]) 
= L^2\big([0,2\pi]; dt/(2\pi); L^2([0,\pi];dx)\big),     \lb{1.29}
\end{equation}
and where $\slimes$ denotes the limit in $\cK$. By inspection,
$\cG$ is a unitary operator. The inverse Gelfand transform is given by
\begin{equation}
{\cG}^{-1}=\begin{cases} \cK \to L^2(\bbR) \\
\hspace*{.05cm}
F \mapsto ({\cG}^{-1}F)(x+n \pi) = \f{1}{2\pi}\int_{[0,2\pi]}
dt\, F(x,t) e^{int}, \quad n\in\bbZ.  \\[1mm]
\end{cases}     \label{1.30}
\end{equation}
One then has the spectral connection
\begin{equation}
\sigma(H)=\bigcup_{0\leq t\leq
\pi}\sigma(H(t)) \lb{1.31}
\end{equation}
(we recall that $\sigma(H(2\pi -t))=\sigma(H(t))$, $t\in [0,\pi]$). 

There has been a flurry of activity in connection with spectral theory for 
non-self-adjoint Hill operators$H$ (and $H^P$, $H^{AP}$) and more general, 
for closed realizations of $L$ on $[0,\pi]$ in \eqref{1.1} for various 
(non-self-adjoint boundary conditions) especially since the late 1970's, 
some of which were inspired by connections to the Korteweg--deVries hierarchy of evolution equations (we refer, e.g., to \cite{BG06}--\cite{Ch06}, 
\cite{Ga80}--\cite{GW98a}, \cite{GU83}, \cite{Ko97}, \cite{Me77}, 
\cite{PT88}--\cite{PT91a}, \cite{Ro63}, \cite{ST96}--\cite{Sh04a}, 
\cite{Tk64}--\cite{Tk02}, \cite{Ve80}--\cite{Ve06}, 
\cite{We98}, \cite{We98a},  \cite{Zh69}). More closely related to the current 
paper are the following publications studying non-self-adjoint boundary value 
problems and associated completeness and basis properties of the associated 
root vector systems such as \cite{DV05}--\cite{Ef07},  
\cite{KM98}--\cite{Ki09}, \cite{Ma06}--\cite{MM08}, 
\cite{MM10}--\cite{Mi10}, \cite{SS07}--\cite{SV09}, \cite{Ve10}--\cite{VK07}. 
We also note that basis properties for (anti)periodic boundary conditions 
in the context of $L^p([0,\pi]; dx)$, $p \in (1,\infty)$, were recently discussed 
in \cite{MM10} (see also \cite{Ma06}, \cite{Ma06a}, \cite{Ma08}, \cite{Ma08a} 
for various other boundary conditions). 

Still, to the best of our knowledge, results of the type Theorem \ref{t1.2} 
appear to be without precedent in the literature. 

Finally, we briefly summarize the principal notation used in this manuscript: 
Let $\cH$ be a separable complex (infinite-dimensional) Hilbert space, 
$(\cdot,\cdot)_{\cH}$ the 
scalar product in $\cH$ (linear in the second factor), and $I_{\cH}$ the identity operator in $\cH$. Next, let $T$ be a linear operator in $\cH$, with $\dom(T)$, 
$\ran(T)$, and $\ker(T)$ denoting the
domain, range, and kernel (i.e., null space) of $T$. 
The spectrum, 
point spectrum, 
and resolvent set of a closed linear operator 
in $\cH$ will be denoted by 
$\sigma(\cdot)$, 
$\sigma_{\rm p}(\cdot)$, 
and $\rho(\cdot)$, respectively. The Banach space of bounded linear operators 
on $\cH$ is denoted by $\cB(\cH)$; 
similarly, $\cB_{\infty}(\cH)$ and $\cB_p(\cH)$, 
$p>0$, denote the Banach space of compact operators on $\cH$ and 
the $\ell^p(\bbN)$-based trace ideals of $\cB(\cH)$. 

Prime ${}^{\prime}$ denotes differentiation w.r.t. $x$, $\bullet$ abbreviates 
$\zeta$-derivatives where $z=\zeta^2$ represents the complex spectral 
parameter. We also use the abbreviation $\bbN_0=\bbN\cup\{0\}$. For 
simplicity of notation we will abbreviate $I = I_{L^2([0,\pi]; dx)}$.

\section{Floquet Theoretic Preliminaries} \lb{s2}

In this section we briefly recall some standard results on the operators 
$H^P$ and $H^{AP}$ and some elements of the associated  Floquet 
theory. The results recorded in this section were discussed in detail in 
\cite[Sects.\ 2--4]{GT09}, hence we reproduce them here 
for the convenience of the reader without proofs. 

For the remainder of this paper we assume condition \eqref{1.2}, that is, we 
will always suppose that $V\in L^2 ([0,\pi]; dx)$. 

Associated with the differential expression $L = - d^2/dx^2 + V(x)$, 
$x\in [0,\pi]$, one introduces the fundamental system of
distributional solutions $c(\zeta,\cdot)$ and $s(\zeta,\cdot)$ of
\begin{equation}
L \psi(\zeta,x) = \zeta^2 \psi(\zeta,x), \quad \zeta\in\bbC, \; x \in [0,\pi],  \lb{2.0}  
\end{equation} 
satisfying
\begin{equation}
c(\zeta,0)=s'(\zeta,0)=1, \quad
c'(\zeta,0)=s(\zeta,0)=0, \quad \zeta \in \bbC.  \lb{2.1}
\end{equation}
For each $x\in\bbR$, $c(\zeta,x)$ and $s(\zeta,x)$ are entire with respect
to $\zeta$. The monodromy matrix
$\cM(\zeta)$ is then given by
\begin{equation}
\cM(\zeta)=\begin{pmatrix} c(\zeta,\pi) & s(\zeta,\pi) \\
c'(\zeta,\pi) & s'(\zeta,\pi) \end{pmatrix}, \quad
\zeta \in\bbC,   \lb{2.2}
\end{equation}
and its eigenvalues $\rho_\pm(\zeta)$, the Floquet multipliers, satisfy
\begin{equation}
\rho_+(\zeta)\rho_-(\zeta)=1  \lb{2.3}
\end{equation}
since
\begin{equation}
\det(\cM(\zeta))=c(\zeta,\pi) s'(\zeta,\pi) - c'(\zeta,\pi) s(\zeta,\pi) = 1.
\lb{2.4}
\end{equation}
The Floquet discriminant $u_+(\cdot)$ is
then defined by
\begin{equation}
u_+(\zeta)=\tr(\cM(\zeta))/2=
[c(\zeta,\pi) + s'(\zeta,\pi)]/2, \quad \zeta\in\bbC,  \lb{2.5}
\end{equation}
$u_+(\cdot)$ is an even function of exponential type, and one obtains
\begin{equation}
\rho_\pm (\zeta) = u_+(\zeta)\pm i\sqrt{1-u_+(\zeta)^2},   \lb{2.6}
\end{equation}
with an appropriate choice of the square root branches. We also note that
\begin{equation}
|\rho_\pm(\zeta)|=1 \, \text{ if and only if } \, u_+(\zeta)\in [-1,1].
\lb{2.7}
\end{equation}
The significance of the Floquet discriminant is underscored by the fact that
\begin{align}
\sigma\big(H^P\big) &= \big\{\zeta^2 \in \bbC \,\big|\, u_+(\zeta) = 1\big\}, 
\lb{2.8} \\
\sigma\big(H^{AP}\big) &= \big\{\zeta^2 \in \bbC \,\big|\, u_+(\zeta) 
= - 1\big\},     \lb{2.9} \\
\sigma(H(t)) &= \big\{\zeta^2 \in \bbC \,\big|\, u_+(\zeta) 
= \cos(t)\big\}, \quad t \in [0,2\pi).    \lb{2.9a} 
\end{align}

Concerning the asymptotic behavior of $c(\zeta,\cdot)$, $s(\zeta,\cdot)$, 
and $u_+(\zeta)$ as $|\zeta|\to \infty$ we recall the representations 
\begin{align}
c(\zeta,x) &= \cos(\zeta x) + c \, \f{\sin(\zeta x)}{\zeta} + \f{f(\zeta,x)}{\zeta}, 
\lb{2.10} \\
s(\zeta,x) &= \f{\sin(\zeta x)}{\zeta} - c \, \f{\cos(\zeta x)}{\zeta^2} 
+ \f{g(\zeta,x)}{\zeta^2},      \lb{2.11} \\
u_+(\zeta) &= \cos(\pi(\zeta - (c/\zeta))) + \f{h(\zeta)}{\zeta^2},    \lb{2.12}
\end{align}
where $c \in \bbC$ is a constant, $f(\zeta,x)$, $g(\zeta,x)$, and $h(\zeta)$ are 
entire functions of exponential type with respect to $\zeta$ (the type of 
$h(\cdot)$ being less or equal to $\pi$) satisfying 
\begin{align}
& \int_{\bbR} d p \int_0^{\pi} dx \, |f(p,x)|^2 + 
\int_{\bbR} d p \int_0^{\pi} dx \, |g(p,x)|^2 < \infty,    \lb{2.13} \\
& \int_{\bbR} d p \, |h(p)|^2 < \infty, \quad 
\sum_{k \in \bbZ} |h(k)| < \infty.     \lb{2.14} 
\end{align}

The asymptotic representations \eqref{2.10}--\eqref{2.14} imply that every disk 
$D_{2k} = \{z\in\bbC \,|\, |z- 4 k^2| \leq 2|c| + 1\}$ for $k\in\bbN$ sufficiently large, 
contains precisely two eigenvalues $\lambda_{2k}^+$ and $\lambda_{2k}^-$ of $H^P$ and one eigenvalue $\mu_{2k}$ of $H^D$ satisfying 
\begin{align}
& \lambda_{2k}^{\pm} = \bigg[2 k + \f{c}{2k\pi} + \f{h_k^{\pm}}{k}\bigg]^2, \quad 
\sum_{k\in\bbN} |h_k^{\pm}|^2 < \infty,     \lb{2.15} \\
& \mu_{2k} = \bigg[2 k + \f{c}{2k\pi} + \f{g_k}{k}\bigg]^2, \quad 
\sum_{k\in\bbN} |g_k|^2 < \infty   \lb{2.16} 
\end{align}  
(cf.\ also \cite[Ch.\ 1, Sect.\ 3.4]{Ma86}). Similarly, every disk 
$D_{k} = \{z\in\bbC \,|\, |z- k^2| \leq 2|c| + 1\}$ contains 
for $k\in\bbN$ sufficiently large a 
critical point $\kappa_{k}$ of $\Delta_+(\zeta^2) = u_+(\zeta)$, that is, 
$u_+^{\bullet}\big(\kappa_{k}^{1/2}\big) = 0$, and \eqref{2.15} implies 
\begin{equation}
2|\lambda_{2k}^{\pm} - \kappa_{2k}| \underset{k \to \infty}{=} 
|\lambda_{2k}^+ - \lambda_{2k}^-| [1 + \oh(1)].    \lb{2.17}
\end{equation}

Moreover, substituting \eqref{2.15} into \eqref{2.10}, \eqref{2.11} yields the existence of a constant $C>0$ such that 
\begin{align}
\begin{split} 
& \sum_{k\in\bbN} \left[\bigg|\int_0^{\pi} dx \, 
c\big((\lambda_{2k}^{\pm})^{1/2},x\big) f(x)\bigg|^2 + 
|\lambda_{2k}^{\pm}| \, \bigg|\int_0^{\pi} dx \, 
s\big((\lambda_{2k}^{\pm})^{1/2},x\big) f(x)\bigg|^2\right]    \\
& \quad \leq C \| f \|_{L^2([0,\pi]; dx)}^2,   \quad 
f \in L^2([0,\pi]; dx).     \lb{2.18}
\end{split}
\end{align}

For future purposes it is also convenient to introduce
\begin{equation}
u_-(\zeta)=[c(\zeta,\pi)-s'(\zeta,\pi)]/2, \quad \zeta\in\bbC. \lb{2.19}
\end{equation}
Floquet solutions $\psi_\pm(\zeta,x)$, $x\in \bbR$, of 
$L \psi(\zeta,x)=\zeta^2 \psi(\zeta,x)$,  
normalized at $x=0$, associated with $L$ are then given by 
($\zeta^2\in\bbC\backslash\{\mu_j\}_{j\in\bbN}$)
\begin{align}
\psi_\pm(\zeta,x)&=c(\zeta,x)+[\rho_\pm(\zeta)-c(\zeta,\pi)]
s(\zeta,\pi)^{-1} s(\zeta,x)    \lb{2.19a}\\
& = c(\zeta,x)+m_\pm(\zeta) s(\zeta,x),    \lb{2.19b} \\
m_\pm(\zeta) &= \big[-u_-(\zeta)\pm i\sqrt{1-u_+(\zeta)^2}\big]
\big/s(\zeta,\pi),    \lb{2.20} \\
\rho_{\pm} (\zeta) &= u_+(\zeta) \pm i \sqrt{1-u_+(\zeta)^2},   
\lb{2.20a} \\
\psi_\pm(\zeta,0)&=1, \quad \psi_{\pm}'(\zeta,0) = m_{\pm} (\zeta).  \lb{2.20b}
\end{align}
One then verifies (for $\zeta^2\in\bbC\backslash\{\mu_j^2\}_{j\in\bbN}$, 
$x\in\bbR$),
\begin{align}
&\psi_{\pm}(\zeta,x+\pi)=\rho_\pm(\zeta) \psi_\pm(\zeta,x),   \lb{2.21} \\
& W(\psi_+(\zeta,\cdot),\psi_-(\zeta,\cdot))= m_-(\zeta)- m_+(\zeta)= -
2i\sqrt{1-u_+ (\zeta)^2} / s(\zeta,\pi),  \lb{2.22} \\
& m_+(\zeta)+m_-(\zeta)=-2u_-(\zeta)/s(\zeta,\pi),  \lb{2.23} \\
& m_+(\zeta)m_-(\zeta)=- c'(\zeta,\pi)/s(\zeta,\pi), \lb{2.24}
\end{align}
where 
\begin{equation}
W(f,g)(x)=f(x)g'(x)-f'(x)g(x),  \quad x\in [0,\pi] \, \text{ (resp., $x\in\bbR$),}    
\lb{2.24a}
\end{equation}
denotes the Wronskian of the $C^1$-functions $f$ and $g$. In particular, 
the Floquet solutions $\psi_{\pm} (\zeta, \cdot)$ satisfy the $\zeta$-dependent boundary conditions (cf.\ also \eqref{1.22}), 
\begin{equation}
g(\pi) = \rho_{\pm} (\zeta) g(0), \quad g'(\pi) = \rho_{\pm} (\zeta) g'(0).  \lb{2.25} 
\end{equation}
Moreover, the Floquet solutions $\psi_{\pm} (\zeta, \cdot)$ become 
periodic solutions of \eqref{2.0} if $\zeta^2 = \lambda_{2k}^{\pm}$, $k\in\bbN$, 
that is, 
\begin{align}
\begin{split} 
& \psi_+\big((\lambda_{2k}^{\pm})^{1/2},x\big) = 
\psi_-\big((\lambda_{2k}^{\pm})^{1/2},x\big), \; x \in \bbR, \\
& \quad \text{if } \, \rho_{\pm} \big((\lambda_{2k}^{\pm})^{1/2}\big) = 1 = 
u_+\big((\lambda_{2k}^{\pm})^{1/2}\big) \, \text{ for some } \, k \in\bbN.    
\lb{2.25a}
\end{split} 
\end{align}

Assuming that the square root in \eqref{2.6} is chosen
such that
\begin{equation}
|\rho_+(\zeta)|<1, \quad |\rho_-(\zeta)|>1, \quad
\zeta\in\bbC, \; u_+ (\zeta) \notin [-1,1],  \lb{2.26}
\end{equation}
then
\begin{equation}
\psi_\pm(\zeta,\cdot) \in L^2([x_0,\pm\infty); dx) \, \text{ for all } \,
x_0\in\bbR, \;
\zeta\in\bbC\backslash \big\{\mu_j^{1/2}\big\}_{j\in\bbN}), \; 
u_+ (\zeta) \notin [-1,1].      \lb{2.27}
\end{equation}

Next, we will take a closer look at eigenfunctions and generalized eigenfunctions 
(i.e., root vectors) of $H^P$. For this purpose we suppose that 
$\psi_j(\zeta_j, \cdot)$, $j=1,2$, are distributional solutions of 
\begin{equation}
L \psi_j (\zeta_j,x) = \zeta_j^2 \psi_j (\zeta_j,x), \quad  
\zeta_j \in\bbC, \, j=1,2, \; x\in[0,\pi].    \lb{2.30} 
\end{equation}
Then the fact that 
\begin{equation}
\f{d}{dx} W(\psi_1(\zeta_1,\cdot), \psi_2(\zeta_2,\cdot))(x) = 
\big(\zeta_1^2 - \zeta_2^2\big) \psi_1(\zeta_1,x) \psi_2(\zeta_2,x),  \lb{2.31}
\end{equation}
yields for $x_j \in [0,\pi]$, $j=1,2$, 
\begin{align}
\begin{split} 
& \int_{x_1}^{x_2} dx \, \psi_1(\zeta_1,x) \psi_2(\zeta_2,x)    \\
& \quad = 
\f{W(\psi_1(\zeta_1,\cdot), \psi_2(\zeta_2,\cdot))(x_2) - 
W(\psi_1(\zeta_1,\cdot), \psi_2(\zeta_2,\cdot))(x_1)}{\zeta_1^2 - \zeta_2^2}, 
\quad \zeta_1^2 \neq \zeta_2^2.     \lb{2.32}
\end{split} 
\end{align}
Letting $\zeta_2 \to \zeta_1$ this implies
\begin{align}
& \int_{x_1}^{x_2} dx \, \psi_1(\zeta_1,x) \psi_2(\zeta_1,x)    \lb{2.33} \\
& \quad = 
- \big[W(\psi_1(\zeta_1,\cdot), \psi_2^{\bullet}(\zeta_1,\cdot))(x_2) - 
W(\psi_1(\zeta_1,\cdot), \psi_2^{\bullet}(\zeta_1,\cdot))(x_1)\big]\big/ [2\zeta_1], 
\quad \zeta_1 \in \bbC.    \no
\end{align}
Applying \eqref{2.33} with $x_1 = 0$ and $x_2 = x$ and $\psi_1(\zeta,\cdot)$ 
and $\psi_2(\zeta,\cdot)$ equal to one of $s(\zeta,\cdot)$ and $c(\zeta, \cdot)$, and employing (cf.\ \eqref{2.1})
\begin{equation}
s^{\bullet}(\zeta,0) = c^{\bullet}(\zeta,0)= s^{\bullet\, \prime}(\zeta,0)
= c^{\bullet\, \prime}(\zeta,0) = 0, \quad \zeta \in \bbC,    \lb{2.34}
\end{equation}
then implies 
\begin{align}
s^{\bullet}(\zeta,x) &= 2 \zeta \int_0^x dy \, 
[c(\zeta,x) s(\zeta,y) - s(\zeta,x) c(\zeta,y)] s(\zeta,y),   \lb{2.35} \\ 
c^{\bullet}(\zeta,x) &= 2 \zeta \int_0^x dy \, 
[c(\zeta,x) s(\zeta,y) - s(\zeta,x) c(\zeta,y)] c(\zeta,y),   \lb{2.36} \\ 
s^{\bullet\,\prime}(\zeta,x) &= 2 \zeta \int_0^x dy \, 
[c'(\zeta,x) s(\zeta,y) - s'(\zeta,x) c(\zeta,y)] s(\zeta,y),   \lb{2.37} \\ 
c^{\bullet\,\prime}(\zeta,x) &= 2 \zeta \int_0^x dy \, 
[c'(\zeta,x) s(\zeta,y) - s'(\zeta,x) c(\zeta,y)] c(\zeta,y)    \lb{2.38} 
\end{align}
for $\zeta \in \bbC$, $x \in [0,\pi]$. In particular, choosing $x=\pi$ in 
\eqref{2.36} and \eqref{2.37} implies 
\begin{align}
u_+^{\bullet}(\zeta) &= [c^{\bullet}(\zeta,\pi) + s^{\bullet\,\prime}(\zeta,\pi)]/2   
\no \\
& = \zeta \int_0^{\pi} dy 
\Big[- s(\zeta,\pi) c(\zeta,y)^2 + [c(\zeta,\pi) - s'(\zeta,\pi)] c(\zeta,y) s(\zeta,y)  
\no \\
& \hspace*{1.85cm}   + c'(\zeta,\pi) s(\zeta,y)^2\Big]    \lb{2.39} \\
&= - \zeta s(\zeta,\pi) \int_{0}^{\pi}
dx \, \psi_+(\zeta,x)\psi_-(\zeta,x),  \quad \zeta\in\bbC.     \lb{2.40}
\end{align}
We also refer to \cite[Sect.\ 8.3]{CL85}, \cite[Ch.\ 2]{Ea73}, \cite[Sect.\ 10.8]{In56}, 
\cite[Sect.\ 21.3]{Ti58} in connection with \eqref{2.34}--\eqref{2.39}.

Turning to eigenvectors of $H^P$ associated with some 
$\zeta^2 \in \sigma\big(H^P\big)$, the corresponding eigenvector ansatz 
\begin{equation}
f(\zeta,x) = A c(\zeta,x) + B s(\zeta,x), \quad x \in [0,\pi],    \lb{2.41}
\end{equation}
and the periodic boundary conditions in \eqref{1.3} yield the characteristic 
equation
\begin{align}
\det\left(I_2 - \cM(\zeta)\right) &=
\det\left(\begin{pmatrix} 1-c(\zeta,\pi) & - s(\zeta,\pi) \\
- c'(\zeta,\pi) & 1 - s'(\zeta,\pi) \end{pmatrix}\right)   \no \\ 
&= 2 [1 - c(\zeta,\pi) - s'(\zeta,\pi)]     \no \\
&= 2 [1 - u_+(\zeta)] = 0,     \lb{2.42}
\end{align}
with $I_2$ the identity matrix in $\bbC^2$. 

To discuss the connection between the algebraic multiplicity of an eigenvalue 
of $H^P$ and the order of a zero in equation \eqref{2.42}, we recall the 
following results: 
\begin{align}
& {\det}_{L^2([0,\pi]; dx)} \Big(I - \big(\zeta^2 - \zeta_0^2\big)
\big(H^P - \zeta_0^2 I\big)^{-1}\Big) 
= \f{1 - u_+(\zeta)}{1 - u_+(\zeta_0)}, \quad \zeta \in \bbC, \; 
\zeta_0^2 \in \rho\big(H^P\big),   \lb{2.43} \\
& {\det}_{L^2([0,\pi]; dx)} \Big(I - \big(\zeta^2 - \zeta_0^2\big)
\big(H^D - \zeta_0^2 I\big)^{-1}\Big) 
= \f{s(\zeta,\pi)}{s(\zeta_0,\pi)}, \quad \zeta \in \bbC, \; 
\zeta_0^2 \in \rho\big(H^D\big),   \lb{2.44} \\
& {\det}_{L^2([0,\pi]; dx)} \Big(I - \big(\zeta^2 - \zeta_0^2\big)
\big(H^N - \zeta_0^2 I\big)^{-1}\Big) 
= \f{c'(\zeta,\pi)}{c'(\zeta_0,\pi)}, \quad \zeta \in \bbC, \; 
\zeta_0^2 \in \rho\big(H^N\big).    \lb{2.45}
\end{align}
Here ${\det}_{L^2([0,\pi]; dx)} (\cdot)$ denotes the Fredholm determinant 
of trace class perturbations of the identity operator $I$ in $L^2([0,\pi]; dx)$ 
and we used the fact that 
\begin{align}
& \big(H^P - \zeta_0^2 I\big)^{-1}, \, \big(H^D - \zeta_0^2 I\big)^{-1}, \, 
\big(H^N - \zeta_0^2 I\big)^{-1} 
\in \cB_1\big(L^2([0,\pi]; dx)\big)    \lb{2.46} 
\end{align}
for $\zeta_0^2$ in the corresponding resolvent set. Equations 
\eqref{2.43}--\eqref{2.45} have been derived, for instance, in \cite{GW95} (cf.\ also 
\cite{GM03}). The relevance of \eqref{2.43}--\eqref{2.45} now becomes clear when combined 
with the fact \eqref{1.17b}: 
\begin{align}
& \text{The multiplicity of a zero $\zeta_0$ of $[u_+(\cdot) - 1]$ equals 
the algebraic multiplicity}   \no \\
& \quad \text{of the eigenvalue $\zeta_0^2$ of 
$\sigma\big(H^P\big)$},    \lb{2.47} \\
& \text{the multiplicity of a zero $\zeta_0$ of $s(\cdot,\pi)$ equals 
the algebraic multiplicity}    \no \\
& \quad \text{of the eigenvalue $\zeta_0^2$ of 
$\sigma\big(H^D\big)$},    \lb{2.48} \\
& \text{the multiplicity of a zero $\zeta_0$ of $c'(\cdot,\pi)$ equals 
the algebraic multiplicity}    \no \\ 
& \quad \text{of the eigenvalue $\zeta_0^2$ of 
$\sigma\big(H^N\big)$}.     \lb{2.49} 
\end{align}
Alternatively, the conclusions in \eqref{2.47}--\eqref{2.49} also follow from 
the results in \cite[\&\,2.3]{Na67} (see also \cite[App.\ IV, Sect.\ 2]{Le80}).

To reduce the number of case distinctions necessary in connection with a  
discussion of root vectors of $H^P$ and the multiplicity of a zero $\zeta_0$ in 
\eqref{2.42}, we now assume that $|\zeta_0|$ is sufficiently large so that by the 
asymptotic behavior \eqref{2.10}--\eqref{2.14} and an application of 
Rouch\'e's theorem, $[1 - u_+(\cdot)] = 0$ has at most a zero of multiplicity 
two at $\zeta_0$, and analogously, $s(\cdot,\pi)$ and $c'(\cdot,\pi)$ both have 
at most a simple zero at $\zeta_0$. Combining this with the facts in 
\eqref{2.47}--\eqref{2.49}, this implies that 
\begin{align}
& \text{the algebraic multiplicity of any eigenvalue of $H^P$ outside 
$D(0;R)$}   \no \\
& \quad \text{is at most two for $R>0$ sufficiently large,}    \lb{2.50} \\
& \text{the algebraic multiplicity of any eigenvalue of $H^D$ and $H^N$ 
outside $D(0;R)$}    \no \\
& \quad \text{equals one for $R>0$ sufficiently large,}    \lb{2.51} 
\end{align}
where $D(0;R) \subset \bbC$ denotes the open disk in $\bbC$ with center 
at the origin and radius $R>0$. In particular, for $R>0$ sufficiently large, all eigenvalues of $H^D$ and $H^N$ outside $D(0;R)$ are thus simple. 

To describe the root vectors of $H^P$ we now consider the following cases:

\medskip 

$\textbf{Case\,(I):}$ Suppose that $\zeta_0$ is a simple zero of $[1 - u_+(\cdot)]$,  
that is, 
\begin{equation}
u_+(\zeta_0) = 1, \quad u_+^{\bullet} (\zeta_0) \neq 0.    \lb{2.51a}
\end{equation}

\smallskip 

Then $\zeta_0^2$ is a simple eigenvalue of $H^P$ with corresponding 
eigenspace 
\begin{equation}
\ker\big(H^P - \zeta_0^2 I\big) = \begin{cases} \text{lin.\,span} \, 
\big\{s(\zeta_0,\pi) c(\zeta_0,\cdot) + [1 - c(\zeta_0,\pi)] s(\zeta_0,\cdot)\big\}, \\
\hspace*{2.8cm}  \text{if $s(\zeta_0,\pi) \neq 0$ or $c(\zeta_0,\pi) \neq 1$}, \\
\text{lin.\,span} \, \big\{[1 - s'(\zeta_0,\pi)] c(\zeta_0,\cdot) + c'(\zeta_0,\pi)] s(\zeta_0,\cdot)\big\}, \\
\hspace*{2.9cm}  \text{if $c'(\zeta_0,\pi) \neq 0$ or $s'(\zeta_0,\pi) \neq 1$.}
\end{cases}     \lb{2.52}
\end{equation}
One observes that 
\begin{equation}
s(\zeta_0,\pi) = c'(\zeta_0,\pi) = 0 \, \text{ and } \, 
c(\zeta_0,\pi) = s'(\zeta_0,\pi) = 1     \lb{2.53}
\end{equation}
contradicts the assumption that $\zeta_0$ is a simple zero of $[1 - u_+(\cdot)]$. 

\medskip 

$\textbf{Case\,(II):}$ Suppose that $\zeta_0$ is a double zero of 
$[1 - u_+(\cdot)]$, that is,
\begin{equation}
u_+(\zeta_0) = 1, \quad u_+^{\bullet} (\zeta_0) = 0, 
\quad u_+^{\bullet \, \bullet} (\zeta_0) \neq 0,    \lb{2.53a}
\end{equation}
and 
\begin{equation}
s(\zeta_0,\pi) \neq 0, \quad c'(\zeta_0,\pi) = 0.    \lb{2.53b}
\end{equation} 

\smallskip 

Then $W(c(\zeta_0,\cdot),s(\zeta_0,\cdot))(\pi) =1$ and $u_+(\zeta_0) = 1$ yield 
$c(\zeta_0,\pi) = s'(\zeta_0,\pi) = 1$ and hence \eqref{2.39} implies 
\begin{equation}
\int_0^{\pi} dy \, c(\zeta_0,y)^2 = 0    \lb{2.54}
\end{equation}
and thus \eqref{2.38} yields 
\begin{equation}
c^{\bullet \, \prime} (\zeta_0, \pi) = 0.     \lb{2.55}
\end{equation}
However, \eqref{2.55} would imply the contradiction that $\zeta_0^2$ is an 
eigenvalue of $H^N$ of algebraic multiplicity equal to two by \eqref{2.38} and \eqref{2.45}. Hence this case cannot occur for $|\zeta_0| > R$ by hypothesis.  

\medskip 

$\textbf{Case\,(III):}$ Suppose that $\zeta_0$ is a double zero of 
$[1 - u_+(\cdot)]$, that is, 
\begin{equation}
u_+(\zeta_0) = 1, \quad u_+^{\bullet} (\zeta_0) = 0, 
\quad u_+^{\bullet \, \bullet} (\zeta_0) \neq 0,    \lb{2.55a}
\end{equation}
and 
\begin{equation}
s(\zeta_0,\pi) = 0, \quad c'(\zeta_0,\pi) \neq 0.
\end{equation}

\smallskip 

Then again $W(c(\zeta_0,\cdot),s(\zeta_0,\cdot))(\pi) =1$ and 
$u_+(\zeta_0) = 1$ yield $c(\zeta_0,\pi) = s'(\zeta_0,\pi) = 1$ and hence 
\eqref{2.39} now implies 
\begin{equation}
\int_0^{\pi} dy \, s(\zeta_0,y)^2 = 0    \lb{2.56}
\end{equation}
and thus \eqref{2.35} yields 
\begin{equation}
s^{\bullet} (\zeta_0, \pi) = 0.     \lb{2.57}
\end{equation}
However, \eqref{2.57} would imply the contradiction that $\zeta_0^2$ is an 
eigenvalue of $H^D$ of  algebraic multiplicity equal to two by \eqref{2.36} and \eqref{2.45}. Hence also this case cannot occur for $|\zeta_0| > R$ by 
hypothesis.  

\medskip 

$\textbf{Case\,(IV):}$ Suppose that $\zeta_0$ is a double zero of 
$[1 - u_+(\cdot)]$, that is, 
\begin{equation}
u_+(\zeta_0) = 1, \quad u_+^{\bullet} (\zeta_0) = 0, 
\quad u_+^{\bullet \, \bullet} (\zeta_0) \neq 0,    \lb{2.57a}
\end{equation}
and 
\begin{equation}
s(\zeta_0,\pi) \neq 0, \quad c'(\zeta_0,\pi) \neq 0.
\end{equation} 

\smallskip 

Then $\zeta_0^2$ is an eigenvalue of $H^P$ of algebraic multiplicity equal to 
two with corresponding eigenspace 
\begin{equation}
\ker\big(H^P - \zeta_0^2 I\big) = \text{lin.\,span} \, 
\big\{s(\zeta_0,\pi) c(\zeta_0,\cdot) + [1 - c(\zeta_0,\pi)] s(\zeta_0,\cdot)\big\} 
\lb{2.58}
\end{equation}
and generalized eigenspace 
\begin{align}
& \ker\Big(\big(H^P - \zeta_0^2 I\big)^2\Big) \Big\backslash 
\ker\big(H^P - \zeta_0^2 I\big) \no \\
& \quad = \text{lin.\,span} \, \bigg\{
\Big\{\big[c^{\bullet}(\zeta_0,\pi)/[1 - c(\zeta_0,\pi)]\big] +
[s^{\bullet}(\zeta_0,\pi)/s(\zeta_0,\pi)]\Big\} c(\zeta_0,\cdot) \no \\
& \hspace*{2.4cm} + c^{\bullet}(\zeta_0,\cdot) 
+ \big[[1 - c(\zeta_0,\pi)]/s(\zeta_0,\pi)\big] s^{\bullet}(\zeta_0,\cdot)\bigg\}.  
\lb{2.59} 
\end{align}

The fact \eqref{2.59} is obtained as follows: First, one makes the general 
ansatz for the root vector $g(\zeta_0,\cdot)$ of $H^P$ associated with 
$\zeta_0^2$,  
\begin{equation}
g(\zeta_0,x) = A c(\zeta_0,x) + B s(\zeta_0,x) + C c^{\bullet}(\zeta_0,x) 
+ D s^{\bullet}(\zeta,x), \quad x \in [0,\pi],   \lb{2.60} 
\end{equation}
and requires that 
\begin{equation}
g(\zeta_0,\cdot) \in \dom\big(H^P\big).    \lb{2.61}
\end{equation}
Because of 
\eqref{2.58} one can, without loss of generality, either set $A=0$ or $B=0$ 
in \eqref{2.60}. Choosing 
\begin{equation} 
B=0     \lb{2.62}
\end{equation} 
in \eqref{2.60}, and noting that 
\begin{equation}
\big(L - \zeta_0^2\big) \f{1}{2 \zeta_0} 
\begin{pmatrix} c^{\bullet}(\zeta_0,\cdot) \\ s^{\bullet}(\zeta_0,\cdot)
\end{pmatrix} = \begin{pmatrix} c(\zeta_0,\cdot) \\ s(\zeta_0,\cdot)
\end{pmatrix}    \lb{2.63}
\end{equation}
in the sense of distributions, the requirement 
\begin{equation}
\big(L - \zeta_0^2\big) g(\zeta,\cdot) = C c(\zeta_0,\cdot) 
+ D s(\zeta,\cdot) \in \dom\big(H^P\big)   \lb{2.64}
\end{equation}
then yields 
\begin{equation}
[1 - c(\zeta_0,\pi)] C - s(\zeta_0,\pi) D = 0,    \lb{2.65}
\end{equation}
and solving for $A$ in connection with \eqref{2.61} implies \eqref{2.59}. 

As a consequence, all eigenvalues $\lambda_{2k}^+$ of $H^P$ of 
sufficiently large magnitude, necessarily either belong to $\textbf{Case\,(I)}$ 
or to $\textbf{Case\,(IV)}$. 

For additional discussions of root vectors we also refer, for instance, 
to \cite[App.\ IV.2]{Le96}, \cite[Sect.\ 1.3]{Ma86} and \cite[\&\,I.2]{Na67}. 

In connection with the concept of biorthogonality we recall the 
elementary fact that if $H^P f_k = \lambda_k f_k$, $f_k \in \dom\big(H^P\big)$ 
and $\big(H^P\big)^* g_{\ell} = \ol{\lambda_{\ell}} g_{\ell}$, 
$g_{\ell} \in \dom\big(\big(H^P\big)^*\big)$, with 
$\lambda_k \neq \lambda_{\ell}$, then
\begin{align}
\lambda_k (g_{\ell}, f_k)_{\cH} = \big(g_{\ell}, H^P f_k\big)_{\cH} 
= \big(\big(H^P\big)^* g_{\ell}, f_k\big)_{\cH} 
= \lambda_{\ell} (g_{\ell}, f_k)_{\cH}     \lb{2.79}
\end{align}
yields $[\lambda_k - \lambda_{\ell}] (g_{\ell}, f_k)_{\cH} = 0$ and hence 
\begin{equation}
(g_{\ell}, f_k)_{\cH} = 0 \, \text{ since $\lambda_k \neq \lambda_{\ell}$.} 
\lb{2.80}
\end{equation}
In particular, since the boundary conditions in $H^P$ are self-adjoint, this 
shows that $g_{\ell}(x) = \ol{f_{\ell}(x)}$, $x \in [0,\pi]$, satisfies 
$g_{\ell} \in \dom\big(\big(H^P\big)^*\big)$ and 
$\big(H^P\big)^* g_{\ell} = \ol{\lambda_{\ell}} g_{\ell}$, whenever $f_{\ell}$ 
satisfies $H^P f_{\ell} = \lambda_{\ell} f_{\ell}$, $f_{\ell} \in \dom\big(H^P\big)$.  

Of course, completely analogous considerations apply to $H^{AP}$. 

Next, we note that the entire functions $u_+(\cdot) - \cos(t)$, $t\in [0, 2\pi)$, 
$s(\cdot,\pi)$, and $c'(\cdot,\pi)$ are all nonconstant, in particular, the characteristic functions associated with the boundary value problems 
$Lg = \zeta^2 g$, $g, g' \in AC([0,\pi])$, and the corresponding boundary conditions in $H(t)$, $t \in [0,2\pi)$, $H^D$, and $H^N$ do not vanish 
identically. By definition, the latter property characterizes the boundary 
conditions in $H(t)$, $t \in [0,2\pi)$, $H^D$, and $H^N$ as {\it nondegenerate}. This yields the following special case of a result proved, for instance, in 
\cite[Theorem\ 1.3.1]{Ma86}: 

\begin{theorem} \lb{t2.1}
Assume $V \in L^2([0,\pi]; dx)$, then the system of root vectors of 
$H(t)$, $t \in [0,2\pi)$, $H^D$, and $H^N$, respectively, is complete  
in $L^2([0,\pi]; dx)$.
\end{theorem}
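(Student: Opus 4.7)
The plan is to invoke a classical completeness result of Marchenko for second-order differential operators with \emph{nondegenerate} boundary conditions, since the bulk of the work has already been laid out in the preceding paragraph. Specifically, the boundary value problem $Lg=\zeta^2 g$ with the boundary conditions defining $H(t)$ ($t\in[0,2\pi)$), $H^D$, or $H^N$, is nondegenerate precisely because its characteristic function $u_+(\cdot)-\cos(t)$, $s(\cdot,\pi)$, or $c'(\cdot,\pi)$, respectively, is a nonconstant entire function, as noted just above the theorem. Once nondegeneracy is established, \cite[Theorem 1.3.1]{Ma86} directly yields completeness in $L^2([0,\pi]; dx)$.

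For the reader who wants to see the underlying mechanism rather than merely cite Marchenko, the steps I would lay out are as follows. First, I would recall from \eqref{1.7}--\eqref{2.46} that each of $H(t)$, $H^D$, $H^N$ is densely defined, closed, and has trace class resolvent. Next, using the determinant identities \eqref{2.43}--\eqref{2.45}, together with the asymptotic representations \eqref{2.10}--\eqref{2.14} for $c(\zeta,\pi)$, $s(\zeta,\pi)$, and $u_+(\zeta)$, I would observe that the relevant Fredholm determinant
\begin{equation*}
D_Q(\zeta) \;=\; {\det}_{L^2([0,\pi];dx)}\!\Big(I - \big(\zeta^2-\zeta_0^2\big)\big(H^Q-\zeta_0^2 I\big)^{-1}\Big), \quad Q\in\{t,D,N\},
\end{equation*}
is an entire function of $\zeta$ of exponential type at most $\pi$, hence of order $1/2$ with respect to the spectral variable $z=\zeta^2$.

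The completeness then follows from a Keldysh-type argument: suppose $f\in L^2([0,\pi];dx)$ is orthogonal to every root vector of $H^Q$. Writing $g(x)=\overline{f(x)}$, the biorthogonality computation \eqref{2.79}--\eqref{2.80} shows that the scalar meromorphic function
\begin{equation*}
\phi(\zeta) \;=\; \big(g,(H^Q-\zeta^2 I)^{-1} f\big)_{L^2([0,\pi];dx)}
\end{equation*}
has zero residue at every eigenvalue of $H^Q$ (one uses that the residue at $\lambda_k$ is captured by the Riesz projection $P(\lambda_k,H^Q)$, whose range is spanned by root vectors per \eqref{1.16}). Hence $\phi$ extends to an entire function of $\zeta$. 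Using the Floquet representation \eqref{2.19a}--\eqref{2.22} of the Green's function together with the asymptotics \eqref{2.10}--\eqref{2.12}, one estimates $\phi(\zeta)$ to be of order at most $1/2$ and bounded on rays $\arg\zeta\in\{\pi/4,3\pi/4\}$ where $|u_+(\zeta)|\to\infty$ (so $\|(H^Q-\zeta^2 I)^{-1}\|\to 0$). By a Phragm\'en--Lindel\"of argument on the four sectors between these rays, $\phi\equiv 0$, whence $f=0$.

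The main obstacle is the resolvent estimate on the diagonal rays: one must show $\|(H^Q-\zeta^2 I)^{-1}\|=O(|\zeta|^{-1})$ as $|\zeta|\to\infty$ uniformly on these rays, for which the explicit Green's function built from the Floquet solutions $\psi_\pm(\zeta,\cdot)$ is the cleanest tool. Beyond this technicality, the remaining arguments are standard complex analysis. Since the detailed execution is carried out in full generality in \cite[Sect.\ 1.3]{Ma86}, I would simply invoke that reference.
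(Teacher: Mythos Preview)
Your proposal is correct and matches the paper's approach exactly: the paper does not give an independent proof but simply observes that the characteristic functions $u_+(\cdot)-\cos(t)$, $s(\cdot,\pi)$, $c'(\cdot,\pi)$ are nonconstant (so the boundary conditions are nondegenerate) and then cites \cite[Theorem~1.3.1]{Ma86}. Your supplementary Keldysh--Phragm\'en--Lindel\"of sketch is extra material the paper omits; it is a reasonable outline, though in the residue step you should be careful that vanishing of $(g,P(\lambda_k,H^Q)f)$ requires $g\perp\ran(P(\lambda_k,H^Q))$, which follows from your choice $g=\ol f$ together with the self-adjointness of the boundary conditions.
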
 

It should be noted that since the boundary conditions for $H(t)$ in 
\eqref{1.22} are regular in the sense of Birkhoff (cf.\ \cite[\&\,4.8]{Na67}, and for 
various generalizations, see, e.g., \cite{EF78}, \cite{EFZ05}, \cite{Fr84}, \cite{FR95}, 
\cite{Wa77}), every vector in $L^2([0,\pi]; dx)$ can be represented as a bracketed 
series of root vectors which also implies the completeness of the system of root 
vectors of $H(t)$, $t \in [0,2\pi)$. 

In conclusion, we also recall the resolvent formula for the operator $H(t)$, 
$t \in [0,2\pi)$; the special periodic case, $t=0$, will play a particular role 
in Section \ref{s3}. 
\begin{align}
\begin{split}
\big(\big(H(t) - \zeta^2 I\big)^{-1}f\big)(x) 
= \int_0^{\pi} dy \, G_{H(t)}\big(\zeta^2,x,y\big) f(y),&       \lb{2.81} \\
\zeta^2 \in \rho(H(t)), \; t \in [0, 2\pi), \; f \in L^2([0,\pi]; dx),&    
\end{split} 
\end{align} 
where (cf., e.g., \cite[\&\,3.7]{Na67}, \cite[Sect.\ III.16]{RS78})
\begin{align} 
& G_{H(t)}\big(\zeta^2,x,y\big) = \f{1}{W(\psi_+(\zeta),\psi_-(\zeta))} 
\begin{cases} 
\psi_-(\zeta,x) \psi_+(\zeta,y), & x \leq y, \\
\psi_+(\zeta,x) \psi_-(\zeta,y), & x \geq y, \end{cases}   \lb{2.82} \\
& \qquad + \f{\psi_+(\zeta,x) \psi_-(\zeta,y)}{[e^{it} \rho_-(\zeta) - 1] 
W(\psi_+(\zeta),\psi_+(\zeta))}  
+ \f{\psi_-(\zeta,x) \psi_+(\zeta,y)}{[e^{-it} \rho_-(\zeta) - 1]    
W(\psi_+(\zeta),\psi_-(\zeta))}      \no \\ 
& \quad =  \f{s(\zeta,\pi)}{2[u_+(\zeta) - \cos(t)]} c(\zeta,x) c(\zeta,y)
- \f{c'(\zeta,\pi)}{2[u_+(\zeta) - \cos(t)]} s(\zeta,x) s(\zeta,y)    \no \\
& \qquad + \f{\begin{cases} e^{-i t} - c(\zeta,\pi), & x \leq y, \\
- e^{it} + s'(\zeta,\pi), & x \geq y, \end{cases}}
{2[u_+(\zeta) - \cos(t)]} c(\zeta,x) s(\zeta,y)      \lb{2.83}  \\
& \qquad + \f{\begin{cases} - e^{-i t} + s'(\zeta,\pi), & x \leq y, \\
e^{it} - c(\zeta,\pi), & x \geq y, \end{cases}}
{2[u_+(\zeta) - \cos(t)]} s(\zeta,x) c(\zeta,y),    \no \\
& \hspace*{1.6cm} \zeta^2 \in \rho(H(t)), \; t \in [0, 2\pi), \; x,y \in [0,\pi].    \no 
\end{align}

\section{More Background Properties on $H^P$ and $H^D$} \lb{s3}

In this section we take a closer look at root vectors of $H^P$ and $H^D$.

We start by partitioning the spectrum of $H^D$ appropriately: First, let 
$k_0 \in \bbN$ be sufficiently large such that every disk 
$D_{2k} = \{ z\in\bbC \,|\, |z - 4 k^2| \leq 2 |c| + 1\}$, $k \geq k_0$, 
contains precisely two points of $\sigma \big(H^P\big)$ and one point of 
$\sigma\big(H^D\big)$. It follows from \eqref{2.15}--\eqref{2.17} that the disk 
$D_0 = \{ z\in\bbC \,|\, |z| \leq 4 k_0^2 + 2 |c| + 1\}$ contains precisely 
$2 k_0 + 1$ points of $\sigma \big(H^P\big)$ and $k_0$ points of 
$\sigma \big(H^D\big)$. 

Now we split up $\bbN_0$ into three disjoint subsets as follows,
\begin{equation}
\bbN_0 = \big\{\bbN_{k_0} \cup\{0\}\big\} \cup \bbN_s \cup \bbN_m,    \lb{3.1}
\end{equation} 
where $N_{k_0} = \{1, \dots , k_0\}$, $\bbN_s$ is the subset of all 
positive integers $k$ such that the points of $\sigma \big(H^P\big)$ in 
the disk $D_{2k}$ are simple and distinct, and $\bbN_m$ is the subset of all positive integers $k$ such that there is only one point of 
$\sigma \big(H^P\big)$ inside the disk $D_{2k}$ of algebraic multiplicity 
equal to two. 

For the $2 k_0 + 1$ eigenvalues of $\sigma \big(H^P\big)$ inside the disk 
$D_0$ we fix an arbitrary labeling 
$\lambda_0, \lambda_2^{\pm}, \dots , \lambda_{2 k_0}^{\pm}$ and two 
associated biorthogonal root systems 
\begin{equation}
\{\phi_0, \phi_k^{\pm}\}_{k\in\bbN_{k_0}}, \, \text{ and } \, 
\{\chi_0, \chi_k^{\pm}\}_{k\in\bbN_{k_0}}      \lb{3.2}
\end{equation}
of $H^P$ and $\big(H^P\big)^*$, respectively. 

Moreover, the set $\bbN_s$ can be further disjointly decomposed into 
$\bbN_s = \bbN_s' \cup \bbN_s''$, where the distinct points 
$\lambda_{2k}^+$ and $\lambda_{2k}^-$ of 
$\sigma \big(H^P\big)$ in the disk $D_{2k}$, for $k \in \bbN_s'$ differ from 
$\mu_{2k} \in \sigma\big(H^D\big)$, while for $k \in \bbN_s''$ one of 
the distinct points $\lambda_{2k}^+$ and $\lambda_{2k}^-$ of 
$\sigma \big(H^P\big)$ in the disk $D_{2k}$ coincides with 
$\mu_{2k} \in \sigma\big(H^D\big)$. 

For $k \in\bbN_s'$ we fix an arbitrary labeling of $\lambda_{2k}^+$ and $\lambda_{2k}^-$, 
and set $\lambda_{2k}^+ = \mu_{2k} \neq \lambda_{2k}^-$ for $k \in \bbN_s''$. 

Finally, for $k \in \bbN_m$ one has $\lambda_{2k}^+ = \lambda_{2k}^-$ and we further  
disjointly decompose $\bbN_m = \bbN_m' \cup \bbN_m''$, where 
\begin{equation}
\bbN_m' =\{k \in\bbN_m \,|\, \lambda_{2k}^+ = \lambda_{2k}^- = \mu_{2k}\}, \quad 
\bbN_m'' =\{k \in\bbN_m \,|\, \lambda_{2k}^+ = \lambda_{2k}^- \neq \mu_{2k}\}. 
\lb{3.3}
\end{equation}

In the following we will use the connection 
$z = \zeta^2 \in \bbC$ and predominantly use the variable $\zeta \in \bbC$. 
In addition, we also agree to use the notation 
\begin{equation}
\zeta_{k} = \big[\mu_{k}\big]^{1/2}, \quad 
\xi_0 = \big[\lambda_0\big]^{1/2}, \;  
\xi_{k}^{\pm} = \big[\lambda_{k}^{\pm}\big]^{1/2}, \quad 
\omega_{0} = \big[\kappa_{0}\big]^{1/2}, \; 
\omega_{k} = \big[\kappa_{k}\big]^{1/2}, \; k \in \bbN.    \lb{3.4}
\end{equation} 
In particular, we use the enumeration where (cf.\ \eqref{2.15}, \eqref{2.16}) 
for $k \geq k_0$, with $k_0 \in \bbN$ sufficiently large, 
\begin{equation}
\zeta_{2k}^{\pm} \underset{k \to \infty}{=}  2k + \Oh\big(k^{-1}\big), \quad 
\xi_{2k}^{\pm} \underset{k \to \infty}{=} 2k + \Oh\big(k^{-1}\big),   \lb{3.4a}
\end{equation}
and choose some enumeration of the $2k_0 +1$ points 
$\xi_0, \xi_2^{\pm}, \dots , \xi_{2 k_0}^{\pm}$
and the $k_0$ points $\zeta_2, \dots, \zeta_{2 k_0}$ (counting multiplicity) inside the disk $\big\{\zeta \in\bbC \,|\, |\zeta| \leq [4 k_0^2 + 2 |c| + 1]^{1/2}\big\}$.

Moreover, in the proof of Lemma \ref{l4.2} we need to enumerate all zeros 
of $u_+(\cdot) - 1$, $u_+^{\bullet}(\cdot)$, and $s(\cdot,\pi)$. Due to our 
choice of $\lambda_0 = [\xi_0]^2, \lambda_k^{\pm} = [\xi_k^{\pm}]^2$, 
$\mu_k = [\zeta_k]^2$, $\kappa_0 = [\omega_0]^2$, 
$\kappa_k = [\omega_k]^2$, $k\in\bbN$, we then use the additional 
enumeration 
\begin{equation}
\zeta_{-k} = - \zeta_{k}, \quad 
\pm\xi_0, \; \xi_{-k}^{\pm} = - \xi_{k}^{\pm},  \quad 
\pm\omega_0, \; \omega_{-k} = - \omega_{k}, \quad k \in \bbN.    \lb{3.4b}
\end{equation} 

Summarizing our notation concerning $\bbN_s$ and $\bbN_m$, one then 
has the following scenarios:
\begin{align}
& u_+(\xi_{2k}^{\pm}) =1, \quad u_+^{\bullet}(\xi_{2k}^{\pm}) \neq 0, \quad 
\xi_{2k}^+ \neq \xi_{2k}^-, \quad k \in \bbN_s,    \lb{3.4c} \\
& s(\xi_{2k}^{\pm},\pi) \neq 0, \quad k \in \bbN_s^{\prime},    \lb{3.4d} \\
& s(\xi_{2k}^+,\pi) = 0, \quad s^{\bullet}(\xi_{2k}^+,\pi) \neq 0,
\quad s(\xi_{2k}^-,\pi) \neq 0, \quad 
 k \in \bbN_s^{\prime \prime},    \lb{3.4e} \\
 & u_+(\xi_{2k}^{\pm}) =1, \quad u_+^{\bullet}(\xi_{2k}^{\pm}) = 0,
 \quad u_+^{\bullet\,\bullet}(\xi_{2k}^{\pm}) \neq 0, \quad 
 \xi_{2k}^+ = \xi_{2k}^-, \quad k \in \bbN_m,    \lb{3.4f} \\
& s(\xi_{2k}^{\pm},\pi) = 0, \quad s^{\bullet}(\xi_{2k}^{\pm},\pi) \neq 0,
\quad k \in \bbN_m^{\prime},    \lb{3.4g} \\ 
& s(\xi_{2k}^{\pm},\pi) \neq 0, \quad k \in \bbN_m^{\prime\prime}.    \lb{3.4h} 
\end{align}

To describe root vectors of $H^P$ and $\big(H^P\big)^*$ we will employ the resolvent $R^P(\cdot)$ of $H^P$. The latter corresponds to the case $t=0$ in 
\eqref{2.83} and hence is of the form,
\begin{align}
& \Big(R^P\big(\zeta^2\big) f\Big)(x) 
= \Big(\big(H^P - \zeta^2 I\big)^{-1} f\Big)(x)   
= \int_0^{\pi} dy \, G_{H^P} \big(\zeta^2,x,y\big) f(y),    \lb{3.5A} \\ 
& \hspace*{5.55cm} \zeta^2 \in \rho\big(H^P\big), \; f \in L^2([0,\pi]; dx), \no \\
& G_{H^P} \big(\zeta^2,x,y\big) 
= \f{s(\zeta,\pi)}{2[u_+(\zeta) -1]} c(\zeta,x) c(\zeta,y) 
- \f{c'(\zeta,\pi)}{2[u_+(\zeta) -1]} s(\zeta,x) s(\zeta,y)    \no \\
& \hspace*{2.45cm} + \f{A(\zeta)}{2[u_+(\zeta) -1]} c(\zeta,x) s(\zeta,y) 
+ \f{B(\zeta)}{2[u_+(\zeta) -1]} s(\zeta,x) c(\zeta,y)   \no \\
& \hspace*{2.45cm}  + \Omega(\zeta,x,y;f;A,B),  \quad 
\zeta^2 \in \rho\big(H^P\big), \;  x,y \in [0,\pi],    \lb{3.5}
\end{align}
where $A(\zeta)$ and $B(\zeta)$ can be chosen to be either 
$[1 - c(\zeta,\pi)]$ or $[s'(\zeta,\pi) - 1]$, 
\begin{equation}
A(\zeta), \, B(\zeta) \in \big\{[1 - c(\zeta,\pi)], [s'(\zeta,\pi) - 1]\big\},    \lb{3.6}
\end{equation}
and $\Omega(\zeta,x,y;f;A,B)$ is entire with respect to $\zeta$. 

\medskip

$\mathbf{(I)}$ We start with eigenvectors of $H^P$ associated with the 
eigenvalues $\lambda_{2k}^+ = \big[\xi_{2k}^+\big]^2$ for 
$k \in \bbN_s = \bbN_s' \cup \bbN_s''$. 

We first treat the case $k \in \bbN_s'$, where $\xi_{2k}^+ \neq \xi_{2k}^-$, 
$\zeta_{2k} \neq \xi_{2k}^{\pm}$, and hence 
$s(\xi_{2k}^{\pm},\pi) \neq 0$. Focusing temporarily on $\xi_{2k}^+$, we 
separately consider the cases where $s'(\xi_{2k}^+,\pi) \neq 1$ and 
$s'(\xi_{2k}^+,\pi) = 1$. We start with the case 

\medskip

\noindent $s'(\xi_{2k}^+,\pi) \neq 1$: Then 
\begin{equation}
s'(\xi_{2k}^+,\pi) - 1 = s'(\xi_{2k}^+,\pi) - u_+(\xi_{2k}^+) = - u_-(\xi_{2k}^+) 
= -[c(\xi_{2k}^+,\pi) - 1],     \lb{3.7} 
\end{equation}
and the identity
\begin{equation}
u_+(\zeta)^2 -1 - u_-(\zeta)^2 = c'(\zeta,\pi) s(\zeta,\pi),    \lb{3.7a}
\end{equation}
yields 
\begin{equation}
\f{u_-(\xi_{2k}^+)}{s(\xi_{2k}^+,\pi)^2} = - \f{c'(\xi_{2k}^+,\pi)}{s(\xi_{2k}^+,\pi)}. 
\end{equation}
Since $\lambda_{2k}^+ = \big[\xi_{2k}^+\big]^2$ is a simple pole of 
$R^P(\cdot)$, one obtains from \eqref{3.5}
\begin{equation}
- \Big({\rm Res}_{\zeta^2=\lambda_{2k}^+} R^P\big(\zeta^2\big) f\Big)(x) =
\f{\xi_{2k}^+ s(\xi_{2k}^+,\pi)}{u_+^{\bullet} (\xi_{2k}^+)} \psi_{\pm}(\xi_{2k}^+,x) 
\int_0^{\pi} dy \, \psi_{\pm}(\xi_{2k}^+,y) f(y),     \lb{3.8}
\end{equation}
with 
\begin{equation}
\psi_{\pm}(\xi_k^+,x) = c(\xi_k^+,x) - [u_-(\xi_k^+)/s(\xi_k^+,\pi)] s(\xi_k^+,x).    
\lb{3.9}
\end{equation}

Next, we turn to the case

\medskip

\noindent $s'(\xi_{2k}^+,\pi) = 1$: Then the identity \eqref{3.7a} 
and the fact
\begin{equation}
2 u_+(\xi_{2k}^+) = c(\xi_{2k}^+,\pi) + s'(\xi_{2k}^+,\pi) = 2    \lb{3.11}
\end{equation}
imply
\begin{equation}
c(\xi_{2k}^+,\pi) = 1, \quad u_-(\xi_{2k}^+) = 0,   \, \text{ and hence } \, 
c'(\xi_{2k}^+,\pi) s(\xi_{2k}^+,\pi) =0.    \lb{3.12}
\end{equation}
Since $k \in \bbN_s'$ and thus $s(\xi_{2k}^+,\pi) \neq 0$, one obtains that 
$c'(\xi_{2k}^+,\pi) = 0$ and \eqref{3.5} implies 
\begin{equation}
- \Big({\rm Res}_{\zeta^2=\lambda_{2k}^+} R^P\big(\zeta^2\big) f\Big)(x) =
\f{\xi_{2k}^+ s(\xi_{2k}^+,\pi)}{u_+^{\bullet} (\xi_{2k}^+)} c(\xi_{2k}^+,x) 
\int_0^{\pi} dy \, c(\xi_{2k}^+,y) f(y),     \lb{3.13}
\end{equation}
which coincides with \eqref{3.8} since $u_- (\xi_{2k}^+)=0$ and 
$u_+(\xi_{2k}^+)=1$ now yield 
\begin{equation}
\psi_{\pm}(\xi_{2k}^+,x) = c(\xi_{2k}^+,x).    \lb{3.13a}
\end{equation} 

Analogous arguments apply to $\lambda_{2k}^- = \big[\xi_{2k}^-\big]^2$ 
and hence permit one to introduce 
\begin{align}
\begin{split}
\phi_k^{\pm}(x) &= \bigg[
\f{\xi_{2k}^{\pm} s(\xi_{2k}^{\pm},\pi)}{u_+^{\bullet}(\xi_{2k}^{\pm})}\bigg]^{1/2}
\psi_+ (\xi_{2k}^{\pm},x),    \\
\chi_k^{\pm}(x) &= \ol{\phi_k^{\pm}(x)}, \quad 
x \in [0,\pi], \; k \in \bbN_s'.     \lb{3.14} 
\end{split} 
\end{align}

Employing \eqref{2.40}, one concludes that 
\begin{equation}
\{\phi_k^{\pm}\}_{k\in\bbN_s'} \, \text{ and } \, 
\{\chi_k^{\pm}\}_{k\in\bbN_s'}     \lb{3.15}
\end{equation}
form two biorthogonal systems in $L^2([0,\pi]; dx)$, where 
$\phi_k^{\pm}$, $k\in\bbN_s'$, are eigenvectors of $H^P$, and 
$\chi_k^{\pm}$, $k\in\bbN_s'$, are eigenvectors of $\big(H^P\big)^*$, 
with eigenvalues $\xi_{2k}^{\pm}$ and $\ol{\xi_{2k}^{\pm}}$,
respectively. 

Next, we consider the case $k \in \bbN_s''$ implying that 
$\xi_{2k}^+ = \zeta_{2k} \neq \xi_{2k}^-$. In this case 
\begin{equation}
c(\xi_{2k}^+,\pi) - 1 = s'(\xi_{2k}^+,\pi) - 1 = s(\xi_{2k}^+,\pi) 
= s(\zeta_{2k},\pi) = u_-(\xi_{2k}^+) = 0    \lb{3.16}
\end{equation}
and 
\begin{equation}
c'(\xi_{2k}^+,\pi) = \lim_{\zeta \to \xi_{2k}^+} 
\f{u_+(\zeta)^2 -1 - u_-(\zeta)^2}{s(\zeta,\pi)} 
= \f{2 u_+^{\bullet} (\xi_{2k}^+)}{s^{\bullet} (\xi_{2k}^+,\pi)}.    \lb{3.17}
\end{equation}
Rewriting \eqref{2.40} in the form 
\begin{align}
\begin{split}
& u_+^{\bullet}(\zeta) = \zeta \int_0^{\pi} dy 
\Big[- s(\zeta,\pi) c(\zeta,y)^2 + 2 u_-(\zeta) c(\zeta,y) s(\zeta,y)    \lb{3.18} \\
& \hspace*{2.75cm} + 
\big[u_+(\zeta)^2 - 1 - u_-(\zeta)^2\big] s(\zeta,\pi)^{-1} s(\zeta,y)^2\Big],
\end{split} 
\end{align}
one concludes that 
\begin{equation}
s^{\bullet} (\xi_{2k}^+,\pi) = 2 \xi_{2k}^+ \int_0^{\pi} dy \, s(\xi_{2k}^+,y)^2 
\neq 0.      \lb{3.19}
\end{equation}
Thus, \eqref{3.5} implies 
\begin{equation}
- \Big({\rm Res}_{\zeta^2=\lambda_{2k}^+} R^P\big(\zeta^2\big) f\Big)(x) =
\f{-2 \xi_{2k}^+}{s^{\bullet} (\xi_{2k}^+,\pi)} s(\xi_{2k}^+,x) 
\int_0^{\pi} dy \, s(\xi_{2k}^+,y) f(y).     \lb{3.19a}
\end{equation}

Hence, introducing
\begin{align}
& \phi_k^+(x) = \bigg[\f{-2 \xi_{2k}^+}{s^{\bullet}(\xi_{2k}^+,\pi)}\bigg]^{1/2} 
s(\xi_{2k}^+,x),    \no \\
& \phi_k^-(x) 
= \bigg[\f{\xi_{2k}^- s(\xi_{2k}^-,\pi)}{u_+^{\bullet}(\xi_{2k}^-)}\bigg]^{1/2} 
\psi_+ (\xi_{2k}^-,x),     \lb{3.20} \\
& \chi_k^{\pm} (x) = \ol{\phi_k^{\pm} (x)}, \quad x \in [0,\pi], \; k \in \bbN_s''  
\no 
\end{align}
(the case for $\xi_{2k}^-$ being analogous to that in \eqref{3.14}), one 
obtains that 
\begin{equation}
\{\phi_k^{\pm}\}_{k\in\bbN_s''} \, \text{ and } \, 
\{\chi_k^{\pm}\}_{k\in\bbN_s''}       \lb{3.22}
\end{equation}
represent two biorthogonal systems in $L^2([0,\pi]; dx)$. 
Here $\phi_k^{\pm}$, $k\in\bbN_s''$, are eigenvectors of $H^P$, and 
$\chi_k^{\pm}$, $k\in\bbN_s''$, are eigenvectors of $\big(H^P\big)^*$, 
with eigenvalues $\xi_{2k}^{\pm}$ and $\ol{\xi_{2k}^{\pm}}$,
respectively. 

\medskip

$\mathbf{(II)}$ We continue with root vectors associated with the eigenvalues 
$\lambda_{2k}^+ = \big[\xi_{2k}^+\big]^2$ for 
$k \in \bbN_m = \bbN_m' \cup \bbN_m''$. 

We start with $k \in \bbN_m'$, where $\xi_{2k}^{\pm} = \zeta_{2k}$ and 
\begin{align}
\begin{split} 
c(\zeta_{2k},\pi) - 1 &= s'(\zeta_{2k},\pi) - 1 = s(\zeta_{2k},\pi) 
= c'(\zeta_{2k},\pi)      \lb{3.23} \\
&= u_+(\zeta_{2k}) - 1 = u_+^{\bullet}(\zeta_{2k}) = u_-(\zeta_{2k}) = 0,
\end{split} 
\end{align}
and hence every nonzero solution of \eqref{2.0} with 
$\zeta = \xi_{2k}^{\pm} = \zeta_{2k}$  
is an eigenfunction of $H^P$. Moreover, for $k\in \bbN_m'$, 
$\big[1 - u_+(\zeta)^2\big]^{1/2}$ is a single-valued function of $\zeta$ for 
$\zeta^2 \in D_{2k}$ and hence the Floquet solutions permit the asymptotic 
expansion
\begin{equation}
\psi_{\pm}(\zeta,x) \underset{\zeta \to \zeta_{2k}}{=} c(\zeta_{2k},x) 
- \f{u_-^{\bullet}(\zeta_{2k}) 
\pm [u_+^{\bullet \bullet}(\zeta_{2k})]^{1/2}}{s^{\bullet} (\zeta_{2k},\pi)} 
s(\zeta_{2k},x) + \oh(1).    \lb{3.24}
\end{equation}
Differentiating \eqref{2.40} with respect to $\zeta$, subsequently taking 
$\zeta = \zeta_{2k}$, yields
\begin{equation}
u_+^{\bullet \bullet} (\zeta_{2k}) = - \zeta_{2k} s^{\bullet}(\zeta_{2k},\pi) 
\int_0^{\pi} dy \, \psi_+(\zeta_{2k},y) \psi_-(\zeta_{2k},y).      \lb{3.25}
\end{equation}
Denoting 
\begin{equation}
d_k = \big[- \zeta_{2k} s^{\bullet}(\zeta_{2k},\pi)
/u^{\bullet\,\bullet} (\zeta_{2k}\big]^{1/2},    \lb{3.25a}
\end{equation}
it follows from \eqref{2.10}--\eqref{2.16} that the asymptotic representations 
\begin{equation}
d_k \underset{k\to\infty}{=} \pi^{-1/2}[1+\oh(1)], \quad
\psi_{\pm}(\zeta_{2k},x) \underset{k\to\infty}{=} e^{\pm i \zeta_{2k} 
x}[1+\oh(1)]
\end{equation} 
hold. Moreover, if we set 
\begin{equation}
\phi_k^+(x) = d_k \psi_+(\zeta_{2k},x), \quad 
\chi_k^+ (x) = \ol{d_k \psi_-(\zeta_{2k},x)}, \quad x \in [0,\pi], \; k \in \bbN_m',
\lb{3.26}
\end{equation}
then \eqref{3.25} implies 
\begin{equation}
\big(\phi_k^+,\chi_k^+\big)_{L^2([0,\pi]; dx)} = 1, \quad k \in \bbN_m'. 
\end{equation}
Furthermore, one finds that the numbers 
\begin{align}
\begin{split}
& \alpha_k = \big(\chi_k^+, \ol{\chi_k^+}\big)_{L^2([0,\pi]; dx)}, \quad 
\beta_k = \big(\phi_k^+, \ol{\phi_k^+}\big)_{L^2([0,\pi]; dx)},  \\
& \gamma_k = \big(\ol{\chi_k^+} - \alpha_k \phi_k^+, \ol{\phi_k^+} 
- \beta_k \chi_k^+\big)_{L^2([0,\pi]; dx)}, \quad k \in \bbN_m',
\end{split} 
\end{align} 
satisfy the asymptotic relations 
\begin{equation}
\alpha_k \underset{k\to\infty}{=} \oh(1), \quad 
\beta_k \underset{k\to\infty}{=} \oh(1), \quad 
\gamma_k \underset{k\to\infty}{=} 1+\oh(1),   
\end{equation}
and the fact that
\begin{equation}
\big(\ol{\chi_k^+} - \alpha_k \phi_k^+,\chi_k^+\big)_{L^2([0,\pi]; dx)} =
\big(\ol{\phi_k^+} - \beta_k \chi_k^+,\phi_k^+\big)_{L^2([0,\pi]; dx)} = 0, 
\quad k \in \bbN_m'. 
\end{equation}
At this point we define the functions
\begin{align}
\begin{split}
\phi_k^- (x) = \gamma_k^{-1/2} \big[\ol{\chi_k^+(x)} - \alpha_k \phi_k^+(x)\big], \quad 
\chi_k^- (x) = \gamma_k^{-1/2} \big[\ol{\phi_k^+(x)} - \beta_k \chi_k^+(x)\big],& \\
x \in [0,\pi], \; k \in \bbN_m',&    \lb{3.27}
\end{split}
\end{align} 
and conclude that 
\begin{equation}
\{\phi_k^{\pm}\}_{k\in\bbN_m'} \, \text{ and } \, \{\chi_k^{\pm}\}_{k\in\bbN_m'}
\end{equation}
represent two biorthogonal systems in $L^2([0,\pi]; dx)$. Here $\phi_k^{\pm}$, 
$k\in\bbN_m'$, are eigenvectors of $H^P$, and 
$\chi_k^{\pm}$, $k\in\bbN_m'$, are eigenvectors of $\big(H^P\big)^*$, with 
eigenvalues $\zeta_{2k}$ and $\ol{\zeta_{2k}}$, repectively. 

Finally, we consider the case $k\in\bbN_m''$, where 
$\xi_{2k}^+ = \xi_{2k}^- \neq \zeta_{2k}$. In this case 
$s(\xi_{2k}^{\pm},\pi) \neq 0$, and hence \eqref{2.40} implies
\begin{equation}
\int_0^{\pi} dy \, \psi_+(\xi_{2k}^{\pm},y) \psi_-(\xi_{2k}^{\pm},y) = 0   \lb{3.32}
\end{equation}
(with $\xi_{2k}^+ = \xi_{2k}^-$), and 
\begin{align}
\begin{split}
\psi_+(\xi_{2k}^{\pm},x) = \psi_-(\xi_{2k}^{\pm},x) = c(\xi_{2k}^{\pm},x) 
- \big[u_-(\xi_{2k}^{\pm})/s(\xi_{2k}^{\pm},\pi)\big] s(\xi_{2k}^{\pm},x),&    
\lb{3.33} \\
x \in [0,\pi], \; k \in \bbN_m''.&  
\end{split} 
\end{align}
Differentiating \eqref{2.40} with respect to $\zeta$, subsequently taking 
$\zeta = \xi_{2k}^{\pm}$, yields
\begin{equation} 
- u_+^{\bullet \bullet} (\xi_{2k}^{\pm}) = \xi_{2k}^{\pm} s(\xi_{2k}^{\pm},\pi) 
\int_0^{\pi} dy \, \psi_+(\xi_{2k}^{\pm},y) \big[\psi_+^{\bullet} (\xi_{2k}^{\pm},y) 
+ \psi_-^{\bullet} (\xi_{2k}^+,y)\big].    \lb{3.34}
\end{equation}
Employing the identity
\begin{equation}
\psi_+(\zeta,x) + \psi_-(\zeta,x) = 2 c(\zeta,x) - 2 [u_-(\zeta)/s(\zeta,\pi)] 
s (\zeta,x)),    \lb{3.34a}
\end{equation}
to compute $\psi_+^{\bullet} (\xi_{2k}^{\pm},\cdot) 
+ \psi_-^{\bullet} (\xi_{2k}^{\pm},\cdot)$, the functions 
\begin{align}
\phi_k^+(x) &= \f{-1}{u_+^{\bullet \bullet} (\xi_{2k}^{\pm})} \psi_+(\xi_{2k}^{\pm},x), 
\no \\
\phi_k^-(x) &= \xi_{2k}^{\pm} s(\xi_{2k}^{\pm},\pi) 
\big[\psi_+^{\bullet}(\xi_{2k}^{\pm}, x) + \psi_-^{\bullet}(\xi_{2k}^{\pm}, x)\big]   
\no \\
& = 2 \xi_{2k}^{\pm} \big[s(\xi_{2k}^{\pm},\pi) c^{\bullet}(\xi_{2k}^{\pm},x) 
- u_-(\xi_{2k}^{\pm}) s^{\bullet}(\xi_{2k}^{\pm},x)\big]   \no \\
& \quad + 2 \big[[s^{\bullet}(\xi_{2k}^{\pm},\pi)/s(\xi_{2k}^{\pm},\pi)] 
u_-(\xi_{2k}^{\pm})  
- u_-^{\bullet}(\xi_{2k}^{\pm})\big] \xi_{2k}^{\pm} s(\xi_{2k}^{\pm},x), 
\lb{3.35} \\ 
\chi_k^+(x) &= \overline{\phi_k^+(x)},    \no \\
\chi_k^-(x) &= \overline{\phi_k^-(x)} - \big(\phi_k^-, \ol{\phi_k^-}    
\big)_{L^2([0,\pi]; dx)}\chi_k^+(x),
\quad x \in [0,\pi], \; k \in \bbN_m'',    \no 
\end{align}
form two biorthogonal systems in $L^2([0,\pi]; dx)$. Here 
$\phi_k^{\pm}$, $k\in\bbN_m''$, are root vectors of $H^P$, and 
$\chi_k^{\pm}$, $k\in\bbN_m''$, are root vectors of $\big(H^P\big)^*$, 
with eigenvalues $\xi_{2k}^{\pm}$ and $\ol{\xi_{2k}^{\pm}}$,
respectively. 

We summarize the results of this preparatory section as follows:

\begin{theorem} \lb{t3.1}
Assume $V \in L^2([0,\pi]; dx)$. Then the system 
$\{\phi_k^{\pm}(\cdot)\}_{k\in\bbN}$, as defined in \eqref{3.2}, \eqref{3.14}, 
\eqref{3.20}, \eqref{3.26}, \eqref{3.27},
and \eqref{3.35} are all the root vectors of $H^P$, and the system 
$\{\chi_k^{\pm}(\cdot)\}_{k\in\bbN}$, as defined in \eqref{3.2}, \eqref{3.14}, 
\eqref{3.20}, \eqref{3.26}, \eqref{3.27}, and \eqref{3.35} are all the root 
vectors of $\big(H^P\big)^*$. In particular, $\{\phi_k^{\pm}(\cdot)\}_{k\in\bbN}$  
and $\{\chi_k^{\pm}(\cdot)\}_{k\in\bbN}$ are biorthogonal, complete, and 
minimal in $L^2([0,\pi]; dx)$.
\end{theorem}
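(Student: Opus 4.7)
The plan is to assemble Theorem \ref{t3.1} from four ingredients: (a) each $\phi_k^{\pm}$ (resp.\ $\chi_k^{\pm}$) is a root vector of $H^P$ (resp.\ $\big(H^P\big)^*$); (b) the collection exhausts the full set of root vectors modulo scalar multiples; (c) the two systems are biorthogonal; (d) completeness comes for free from Theorem \ref{t2.1} and then minimality is automatic. All of the heavy lifting has in fact already been done in Section \ref{s3}; what remains is to bundle it up and check one or two algebraic identities.

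For (a), in all the simple-eigenvalue cases ($\bbN_{k_0}$, $\bbN_s'$, $\bbN_s''$, and the first member of each pair in $\bbN_m'$, $\bbN_m''$) the functions $\phi_k^{\pm}$ are produced as the nonvanishing numerators of residues of the resolvent \eqref{3.5} at $\zeta^2=\lambda_{2k}^{\pm}$, so they are genuine eigenvectors by construction. The delicate case is the generalized eigenvector $\phi_k^-$ in \eqref{3.35} for $k\in\bbN_m''$: I would differentiate the identity $L c(\zeta,\cdot)=\zeta^2 c(\zeta,\cdot)$ with respect to $\zeta$ to obtain $L c^{\bullet}(\zeta,\cdot)=\zeta^2 c^{\bullet}(\zeta,\cdot)+2\zeta c(\zeta,\cdot)$, and similarly for $s$, and then verify directly from the formulas \eqref{2.35}--\eqref{2.38} together with \eqref{3.23} analogs and the identity \eqref{3.34a} that $\big(H^P-\xi_{2k}^{\pm\,2}I\big)\phi_k^-$ is a scalar multiple of $\phi_k^+$ and that $\phi_k^-$ satisfies the periodic boundary conditions. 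That $\chi_k^{\pm}$ are root vectors of $\big(H^P\big)^*$ with eigenvalues $\overline{\lambda_{2k}^{\pm}}$ follows from the observation right after \eqref{2.80}, namely that complex conjugation intertwines $H^P$ and $\big(H^P\big)^*$ because the boundary conditions in \eqref{1.3} are self-adjoint.

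For (b), exhaustion follows by a counting argument: the partition \eqref{3.1} together with the enumeration \eqref{1.7a} accounts for every point of $\sigma\big(H^P\big)$ exactly once, and within each group the number of root vectors produced matches the algebraic multiplicity determined by the Fredholm determinant formula \eqref{2.43} and the consequences \eqref{2.47}, \eqref{2.50}. Specifically, each simple eigenvalue in $\bbN_s$ contributes one eigenvector, while each double eigenvalue in $\bbN_m$ is either semisimple (case $\bbN_m'$, where \eqref{3.23} forces geometric multiplicity two, with basis $\{\phi_k^+,\phi_k^-\}$ constructed from the two Floquet solutions) or has a nontrivial Jordan block of size two (case $\bbN_m''$, where $\phi_k^+$ spans the eigenspace and $\phi_k^-$ provides the associated vector just constructed).

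For (c), biorthogonality between vectors attached to distinct eigenvalues is exactly \eqref{2.79}--\eqref{2.80}. Within a single double eigenvalue in $\bbN_m'$ or $\bbN_m''$, biorthogonality is built into \eqref{3.27} and \eqref{3.35} by the Gram--Schmidt-type subtractions there; one checks this by direct substitution using the normalizations chosen in \eqref{3.26} and \eqref{3.35} together with \eqref{2.40}, \eqref{3.25}, and \eqref{3.34}. Finally, completeness of $\{\phi_k^{\pm}\}_{k\in\bbN}$ in $L^2([0,\pi];dx)$ is the content of Theorem \ref{t2.1} applied at $t=0$, and the same assertion for $\{\chi_k^{\pm}\}_{k\in\bbN}$ follows by taking complex conjugates since $V\in L^2([0,\pi];dx)$ and the boundary conditions are self-adjoint. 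Minimality of each system is then immediate: if some $\phi_{k_0}^{\pm}$ lay in the closed linear span of the remaining $\phi_k^{\pm}$, then pairing with the biorthogonal partner $\chi_{k_0}^{\pm}$ would yield $1=0$. The main obstacle is the case-$\bbN_m''$ verification in step (a); everything else reduces to careful bookkeeping and invocation of results already established in Sections \ref{s2} and \ref{s3}.
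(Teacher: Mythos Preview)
Your proposal is correct and mirrors the paper's approach: the paper presents Theorem \ref{t3.1} explicitly as a summary of the case-by-case constructions already carried out in Section \ref{s3}, with completeness supplied by Theorem \ref{t2.1} and minimality deduced from the existence of the biorthogonal system of root vectors of $\big(H^P\big)^*$. Your outline (a)--(d) is precisely this assembly, and the verification you flag for $\bbN_m''$ is handled in the paper by the Case\,(IV) computation \eqref{2.58}--\eqref{2.65} together with \eqref{3.32}--\eqref{3.35}.
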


We note that associated with the system of root vectors of every 
operator $H(t)$ as used in Theorem \ref{t3.1}, there exists a unique biorthogonal system of root vectors of the operator $H(t)^*$ (also used in Theorem \ref{t3.1}), implying minimality of the system of root vectors of $H(t)$, $t \in [0, 2\pi)$.

\section{The Proof of Theorem \ref{t1.2}} \lb{s4}

Given the preparations in Sections \ref{s2} and \ref{s3}, we now provide 
the proof of Theorem \ref{t1.2} in this section.  

We will apply the following standard criterion for the existence of a 
Riesz basis:

\begin{theorem} [\cite{GK69}, Theorem\ IV.2.1] \lb{t4.1}
Let $\cH$ be a complex separable Hilbert space and $f_k \in \cH$, $k\in \bbN$.
Then the system $\{f_k\}_{k\in\bbN}$ is a Riesz basis in $\cH$ if and only if 
$\{f_k\}_{k\in\bbN}$ is complete in $\cH$ and there exists a corresponding  
complete biorthogonal system $\{g_k\}_{k\in\bbN}$ $($i.e., 
$(f_j,g_k)_{\cH} = \delta_{j,k}$, $j,k\in\bbN$$)$ such that for some $C>0$,
\begin{equation}
\sum_{k\in\bbN} |(f_k,f)_{\cH}|^2 \leq C \|f\|_{\cH}^2, \quad 
\sum_{k\in\bbN} |(g_k,f)_{\cH}|^2 \leq C \|f\|_{\cH}^2, \quad f \in \cH.   \lb{4.1}
\end{equation}
\end{theorem}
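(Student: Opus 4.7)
The plan is to prove the two directions separately, both via explicit operator constructions. \textbf{For necessity}: Assume $f_k = A e_k$ with $A, A^{-1} \in \cB(\cH)$ and $\{e_k\}$ an orthonormal basis of $\cH$. The natural candidate for the biorthogonal partner is $g_k := (A^*)^{-1} e_k$; biorthogonality is immediate from $(Ae_j, (A^*)^{-1} e_k)_{\cH} = (e_j, e_k)_{\cH} = \delta_{j,k}$, and both families are complete since each is the image of an orthonormal basis under a bounded invertible operator. The Bessel-type bounds \eqref{4.1} then follow from Parseval applied to $A^* f$ and $A^{-1} f$ respectively: one computes
\begin{equation*}
\sum_{k\in\bbN} |(f_k, f)_{\cH}|^2 = \sum_{k\in\bbN} |(e_k, A^* f)_{\cH}|^2 = \|A^* f\|_{\cH}^2 \leq \|A\|^2 \|f\|_{\cH}^2,
\end{equation*}
with the analogous bound $\|A^{-1}\|^2 \|f\|_{\cH}^2$ for $\{g_k\}$, so that $C = \max(\|A\|^2, \|A^{-1}\|^2)$ works.

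\textbf{For sufficiency}: The goal is to produce a bounded invertible operator $B$ and an orthonormal basis such that $f_k$ is its image. Fix any orthonormal basis $\{e_k\}$ of $\cH$ (which exists by separability) and define $B e_k := f_k$ and $D e_k := g_k$ on finite linear combinations. For $v = \sum_{k=1}^n c_k e_k$, the duality formula for the Hilbert space norm yields $\|Bv\|_{\cH} = \sup_{\|h\|_{\cH} = 1} \big|\sum_{k} c_k (h, f_k)_{\cH}\big|$, and Cauchy--Schwarz combined with the first inequality in \eqref{4.1} bounds this by $C^{1/2} \|v\|_{\cH}$. Hence $B$ extends to an operator in $\cB(\cH)$, and the second inequality in \eqref{4.1} gives boundedness of $D$ by the same argument.

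Biorthogonality rewrites as $(Be_j, De_k)_{\cH} = \delta_{j,k}$, that is, $D^* B e_j = e_j$ for every $j$; since $\{e_k\}_{k\in\bbN}$ is total and $D^* B$ is bounded, $D^* B = I_{\cH}$. Consequently $B$ is bounded below, hence has closed range, and since that range contains the complete system $\{f_k\}_{k\in\bbN}$, it is dense, hence equal to $\cH$. Therefore $B \in \cB(\cH)$ is bijective with bounded inverse, and this realizes $\{f_k\}_{k\in\bbN}$ as a Riesz basis in the sense of Definition \ref{d1.1}. \textbf{The main technical obstacle} is the passage from the pointwise inequality \eqref{4.1} to boundedness of $B$ on all of $\cH$: this rests on the duality step $\|v\|_{\cH} = \sup_{\|h\|_{\cH}=1}|(h, v)_{\cH}|$ on finite sums, followed by a density extension. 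I also note that completeness of $\{g_k\}$ is not strictly needed in the construction above, but it guarantees uniqueness of the biorthogonal system (cf.\ the remarks preceding Definition \ref{d1.1}) and is encoded in the symmetric, $f_k \leftrightarrow g_k$ interchangeable formulation of the criterion.
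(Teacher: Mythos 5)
Your proof is correct and complete in both directions; the necessity argument via $g_k=(A^*)^{-1}e_k$ and Parseval, and the sufficiency argument via the two bounded operators $B,D$ with $D^*B=I_{\cH}$ forcing $B$ to be bounded below with dense closed range, is precisely the classical proof of this Bari/Gohberg--Krein criterion. The paper itself supplies no proof of Theorem \ref{t4.1} --- it is quoted verbatim from \cite{GK69}, Theorem IV.2.1 --- so there is nothing internal to compare against; the only point worth flagging is that you (correctly) read the ``orthonormal system'' in Definition \ref{d1.1} as an orthonormal \emph{basis}, which is needed for the completeness assertion in the necessity direction, and your observation that completeness of $\{g_k\}$ is not used in the sufficiency direction is accurate.
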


By a result of Gel'fand on convex functionals (cf.\ \cite[Sect.\ 21]{AG81}) it actually 
suffices to replace \eqref{4.1} by 
\begin{equation}
\sum_{k\in\bbN} |(f_k,f)_{\cH}|^2 < \infty, \quad 
\sum_{k\in\bbN} |(g_k,f)_{\cH}|^2 < \infty, \quad f \in \cH.   \lb{4.2}
\end{equation}

We start with the necessity part of the proof of Theorem \ref{t1.2}:

\begin{proof}[Proof of necessity of the conditions in Theorem \ref{t1.2}] 
Suppose that $\{F_k^{\pm}\}_{k\in\bbN}$ represents a system of root functions 
of $H^P$ that forms a Riesz basis in $L^2([0,\pi]; dx)$. Since by hypothesis 
$\{F_k^{\pm}\}_{k\in\bbN}$ forms a basis in $L^2([0,\pi]; dx)$, the 
corresponding biorthogonal system $\{G_k^{\pm}\}_{k\in\bbN}$ is unique 
(cf., e.g., \cite[Sect.\ VI.1]{GK69}), and 
hence it is formed by the root vectors of $\big(H^P\big)^*$.

For $k \in \bbN_s$, the eigenvalues $\xi_{2k}^+$ and $\xi_{2k}^-$ are simple 
and the corresponding root subspaces are one-dimensional. Thus, there 
exists a sequence $\{\alpha_k^{\pm}\}_{k\in\bbN_s}$ such that 
\begin{equation}
F_k^{\pm} (x) = \alpha_k^{\pm} \phi_k^{\pm} (x), \quad 
G_k^{\pm} (x) = \big(\alpha_k^{\pm}\big)^{-1} \chi_k^{\pm} (x), \quad  
x \in [0,\pi], \; k \in \bbN_s,     \lb{4.3}
\end{equation}
with $\phi_k^{\pm} (\cdot)$, $\chi_k^{\pm} (\cdot)$, $k \in \bbN_s$, as defined 
in \eqref{3.14}, and \eqref{3.20}.  

According to Definition \ref{d1.1} of a Riesz basis, there exists a constant $C>0$ 
such that 
\begin{equation}
C^{-1} \leq \|F_k^{\pm}\|_{L^2([0,\pi]; dx)} \leq C, \quad 
C^{-1} \leq \|G_k^{\pm}\|_{L^2([0,\pi]; dx)} \leq C, \quad k \in \bbN_s.  \lb{4.4}
\end{equation}
Thus,  
\begin{align}
C^{-1} &\leq \|F_k^{\pm}\|_{L^2([0,\pi]; dx)} 
= |\alpha_k^{\pm}| \|\phi_k^{\pm}\|_{L^2([0,\pi]; dx)} 
= |\alpha_k^{\pm}| \|\chi_k^{\pm}\|_{L^2([0,\pi]; dx)}    \no \\
& = |\alpha_k^{\pm}|^2 \|G_k^{\pm}\|_{L^2([0,\pi]; dx)} 
\leq C |\alpha_k^{\pm}|^2,    \lb{4.5}
\end{align}
that is, $|\alpha_k^{\pm}| \geq C^{-1}$. Replacing $F_k^{\pm}$ by 
$G_k^{\pm}$, one obtains $|\alpha_k^{\pm}|^{-1} \geq C^{-1}$, and hence, 
\begin{equation}
C^{-1} \leq |\alpha_k^{\pm}| \leq C, \quad k \in \bbN_s.    \lb{4.6}
\end{equation}
Consequently, the system
\begin{equation}
\{F_0, F_k^{\pm}\}_{k \in \bbN_{k_0}} \cup \{\phi_k^{\pm}\}_{k \in \bbN_s} 
\cup \{F_k^{\pm}\}_{k \in \bbN_m}    \lb{4.7}
\end{equation}
is a Riesz basis in $L^2([0,\pi]; dx)$ with corresponding complete biorthogonal 
system
\begin{equation}
\{G_0, G_k^{\pm}\}_{k \in \bbN_{k_0}} \cup \{\chi_k^{\pm}\}_{k \in \bbN_s} 
\cup \{G_k^{\pm}\}_{k \in \bbN_m}.     \lb{4.8}
\end{equation}

By Theorem \ref{t4.1}, there exists a constant $C>0$ such that 
\begin{equation}
\sum_{k \in \bbN_s} |(\phi_k^{\pm}, f)_{L^2([0,\pi]; dx)}|^2 \leq C 
\|f\|_{L^2([0,\pi]; dx)}^2, \quad f \in L^2([0,\pi]; dx).     \lb{4.9}
\end{equation}

Next, for $k \in \bbN_s'$, we introduce 
\begin{equation}
\gamma_k^{\pm} = 
- \f{\big(\ol{c(\xi_{2k}^{\pm},\cdot)}, s(\xi_{2k}^{\pm},\cdot)\big)_{L^2([0,\pi]; dx)}}
{\big(\ol{s(\xi_{2k}^{\pm},\cdot)}, s(\xi_{2k}^{\pm},\cdot)\big)_{L^2([0,\pi]; dx)}}   
\lb{4.10}
\end{equation}
and 
\begin{equation}
f_k^{\pm} (x) = c(\xi_{2k}^{\pm},x) + \gamma_k^{\pm} s(\xi_{2k}^{\pm},x), 
\quad x \in [0,\pi], \; k \in \bbN_s',     \lb{4.11}
\end{equation}
implying 
\begin{equation}
\big(\ol{f_k^{\pm}}, s(\xi_{2k}^{\pm},\cdot)\big)_{L^2([0,\pi]; dx)} = 0, 
\quad k \in \bbN_s'.     \lb{4.12}
\end{equation}

By \eqref{2.10}, \eqref{2.11}, and \eqref{2.15} one thus infers that 
\begin{align}
& \gamma_k^{\pm} \underset{k \to \infty}{=} \oh(1),    \lb{4.13} \\
& \|c(\xi_{2k}^{\pm},\cdot)\|_{L^2([0,\pi]; dx)}^2 \underset{k \to \infty}{=} 
[1 + \oh(1)] \pi/2,    \lb{4.14} \\
& (c(\xi_{2k}^{\pm},\cdot), \xi_{2k}^{\pm} s(\xi_{2k}^{\pm},\cdot))_{L^2([0,\pi]; dx)} 
\underset{k \to \infty}{=} \oh(1),    \lb{4.15} \\
& \|f_k^{\pm}\|_{L^2([0,\pi]; dx)}^2 \underset{k \to \infty}{=} 
[1 + \oh(1)]\pi/2.    \lb{4.16} 
\end{align}
Thus, by \eqref{3.14},
\begin{equation}
|(f_k^{\pm}, \phi_k^{\pm})_{L^2([0,\pi]; dx)}|^2 \underset{k \to \infty}{=} 
\big|\xi_{2k}^{\pm} s(\xi_{2k}^{\pm},\pi)/u_+^{\bullet}(\xi_{2k}^{\pm})\big| 
[1 + \oh(1)] \pi /2,    \lb{4.17}
\end{equation}
and by \eqref{4.1}, 
\begin{equation} 
\big|\xi_{2k}^{\pm} s(\xi_{2k}^{\pm},\pi)/u_+^{\bullet}(\xi_{2k}^{\pm})\big| 
\leq C, \quad k \in \bbN_s'.    \lb{4.18} 
\end{equation}

For $k \in \bbN_s''$ we define $\gamma_k^-$ as in \eqref{4.10} and obtain the estimate \eqref{4.18} for $\xi_{2k}^-$ using \eqref{3.20}. The corresponding estimate for $\xi_{2k}^+$ is trivial as $s(\xi_{2k}^+,\pi) = 0$. Thus one 
concludes that 
\begin{equation} 
\big|\xi_{2k}^{\pm} s(\xi_{2k}^{\pm},\pi)/u_+^{\bullet}(\xi_{2k}^{\pm})\big| 
\leq C, \quad k \in \bbN_s.    \lb{4.19} 
\end{equation}

Employing \eqref{4.19} and the estimates,
\begin{align}
& \big|\xi_{2k}^{\pm} s(\xi_{2k}^{\pm},\pi)\big| \geq \big|\xi_{2k}^{\pm} - 
\zeta_{2k}\big|
\min_{|\zeta - 2k| \leq 10^{-1}} \big|\zeta s(\zeta,\pi) /
[\zeta - \zeta_{2k}]\big|
\geq C \big|\xi_{2k}^{\pm} - \zeta_{2k}\big|,    \lb{4.20} \\
& \big|u_+^{\bullet} (\xi_{2k}^{\pm})\big| \leq 2
|u_+^{\bullet \bullet}(\omega_{2k})
[\xi_{2k}^{\pm} - \omega_{2k}]| \leq
C \big|\xi_{2k}^+ - \xi_{2k}^- \big|,     \lb{4.21}
\end{align} 
one arrives at \eqref{1.19}. 
\end{proof}

We emphasize that the set $\bbN_m$ plays no role in condition 
\eqref{1.19} in Theorem \ref{t1.2}. 

Next, we turn to the sufficiency part of the proof of Theorem \ref{t1.2}:

We start with the following result:

\begin{lemma} \lb{l4.2}
Assume condition \eqref{1.19}, that is, 
\begin{equation}
\sup_{\substack{k \in \bbN, \\ \lambda_{2k}^+ \neq \lambda_{2k}^-}} 
\f{|\mu_{2k} - \lambda_{2k}^{\pm}|}{|\lambda_{2k}^+ - \lambda_{2k}^-|} < \infty.   
\lb{4.22}
\end{equation}
Then
\begin{equation}
\sup_{k \in \bbN_s} \bigg|\f{u_-(\xi_{2k}^{\pm})}{u_+^{\bullet}(\xi_{2k}^{\pm})}\bigg| 
\leq C < \infty.    \lb{4.23}
\end{equation}
\end{lemma}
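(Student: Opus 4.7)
Plan: My starting point would be the identity $u_-(\xi_{2k}^\pm)^2 = -\,c'(\xi_{2k}^\pm,\pi)\,s(\xi_{2k}^\pm,\pi)$, obtained from \eqref{3.7a} together with $u_+(\xi_{2k}^\pm) = 1$. When $k \in \bbN_s''$ and the $+$ sign is taken, $s(\xi_{2k}^+,\pi) = 0$ forces $u_-(\xi_{2k}^+) = 0$, so the corresponding ratio vanishes; all remaining indices (both signs in $\bbN_s'$ and the $-$ sign in $\bbN_s''$) satisfy $s(\xi_{2k}^\pm,\pi) \neq 0$, and I would work on these uniformly.

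The entire functions $\tilde S(z) := s(\zeta,\pi)$, $\tilde C(z) := c'(\zeta,\pi)$, $\Delta(z) := u_+(\zeta)$ of the spectral variable $z = \zeta^2$ (each is even in $\zeta$) admit, in each disk $D_{2k}$ with $k$ large, the local factorizations
\[
\tilde S(z) = \tfrac{\pi}{8k^2}(z-\mu_{2k})[1+\Oh(1/k)], \quad
\tilde C(z) = -\tfrac{\pi}{2}(z-\nu_{2k})[1+\Oh(1/k)],
\]
\[
\Delta(z) - 1 = -\tfrac{\pi^2}{32 k^2}(z-\lambda_{2k}^+)(z-\lambda_{2k}^-)[1+\Oh(1/k)],
\]
by \eqref{2.10}--\eqref{2.16} and the corresponding Neumann asymptotics, where $\nu_{2k}$ denotes the unique Neumann eigenvalue in $D_{2k}$. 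Using $u_+^{\bullet}(\xi) = 2\xi\,\Delta'(\zeta^2)$, these yield the comparable-magnitude estimates
\[
|u_-(\xi_{2k}^{\pm})|^2 \asymp \tfrac{\pi^2}{16 k^2}\,|\lambda_{2k}^\pm - \mu_{2k}|\,|\lambda_{2k}^\pm - \nu_{2k}|, \quad
|u_+^{\bullet}(\xi_{2k}^\pm)|^2 \asymp \tfrac{\pi^4}{64 k^2}\, |\lambda_{2k}^+ - \lambda_{2k}^-|^2,
\]
reducing the lemma to proving $|\lambda_{2k}^\pm - \mu_{2k}|\,|\lambda_{2k}^\pm - \nu_{2k}| \leq C\, |\lambda_{2k}^+ - \lambda_{2k}^-|^2$. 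Since \eqref{4.22} already controls $|\lambda_{2k}^\pm - \mu_{2k}|$, what remains is a matching bound on $|\lambda_{2k}^\pm - \nu_{2k}|$.

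The decisive ingredient would be the asymptotic trace identity
\[
\mu_{2k} + \nu_{2k} = \lambda_{2k}^+ + \lambda_{2k}^- + \oh\big(|\lambda_{2k}^+ - \lambda_{2k}^-|\big), \quad k \to \infty,
\]
derived by computing $u_+^{\bullet}(\xi_{2k}^+)$ in two ways. Direct differentiation through the local factorization of $\Delta - 1$ gives $u_+^{\bullet}(\xi_{2k}^+) \sim -\pi^2(\lambda_{2k}^+ - \lambda_{2k}^-)/(8k)$. On the other hand, formula \eqref{2.39} together with the standard asymptotics $\int_0^\pi c(\xi_{2k}^+,x)^2\,dx \to \pi/2$, $\int_0^\pi c(\xi_{2k}^+,x)\,s(\xi_{2k}^+,x)\,dx \to 0$, $\int_0^\pi s(\xi_{2k}^+,x)^2\,dx \to \pi/(8k^2)$ yields $u_+^{\bullet}(\xi_{2k}^+) \sim -\pi^2(2\lambda_{2k}^+ - \mu_{2k} - \nu_{2k})/(16k)$. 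Equating the two gives the trace identity, which rewrites as $\lambda_{2k}^+ - \nu_{2k} = \mu_{2k} - \lambda_{2k}^- + \oh(|\lambda_{2k}^+ - \lambda_{2k}^-|)$, and analogously with $\pm$ interchanged. Invoking \eqref{4.22} then yields $|\lambda_{2k}^\pm - \nu_{2k}| \leq (M + \oh(1))|\lambda_{2k}^+ - \lambda_{2k}^-|$, and the required product bound follows at once.

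The principal obstacle is making the $\oh(\cdot)$ error in the trace identity genuinely of smaller order than $|\lambda_{2k}^+ - \lambda_{2k}^-|$, since the latter may be arbitrarily small. I would handle this either by retaining the $\Oh(1/k)$ corrections in the three integrals entering \eqref{2.39}, or, more conceptually, by exploiting the identity $U(z)^2 + \tilde C(z)\tilde S(z) = (\Delta(z)-1)(\Delta(z)+1)$ globally: since $U(z) := u_-(\zeta)$ is a single-valued entire function, the local expansion of the right-hand side in the disk $D_{2k}$ can have no genuine linear-in-$(z-4k^2)$ term beyond what is contributed by $U(4k^2)^2$, and this vanishing condition is precisely the quantitative form of $\mu_{2k} + \nu_{2k} \sim \lambda_{2k}^+ + \lambda_{2k}^-$ at the accuracy required.
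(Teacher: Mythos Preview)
Your approach is correct but takes a genuinely different route from the paper's. The paper never introduces the Neumann eigenvalue: it evaluates $u_-$ at the \emph{Dirichlet} point $\zeta_{2k}$ (where $s(\zeta_{2k},\pi)=0$, so $u_-(\zeta_{2k})^2=u_+(\zeta_{2k})^2-1$), bounds $|u_-(\zeta_{2k})|^2\leq C|\xi_{2k}^+-\zeta_{2k}|\max\{|\omega_{2k}-\xi_{2k}^+|,|\omega_{2k}-\zeta_{2k}|\}$ by elementary Taylor estimates, and then \emph{transfers} this bound from $\zeta_{2k}$ to $\xi_{2k}^\pm$ by function theory: since $\zeta u_-(\zeta)$ lies in the Paley--Wiener class $\mathbb{PW}_\pi$ and $\zeta s(\zeta,\pi)$ is essentially of sine type, one expands $\zeta u_-(\zeta)$ as a Lagrange--Hermite interpolation series over the Dirichlet zeros, obtaining $|u_-(\xi_{2k}^\pm)|\leq C|u_-(\zeta_{2k})|+o(1)\,|\xi_{2k}^\pm s(\xi_{2k}^\pm,\pi)|$. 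This entirely bypasses $\nu_{2k}$ and any trace identity. Your route, via $u_-(\xi_{2k}^\pm)^2=-c'(\xi_{2k}^\pm,\pi)s(\xi_{2k}^\pm,\pi)$ directly at the periodic eigenvalue, is more elementary and perhaps more transparent, but at the price of having to control $|\lambda_{2k}^\pm-\nu_{2k}|$.

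On the point you flag as the principal obstacle: your Method A does succeed, though not quite in the form of a trace identity with $o(|\lambda_{2k}^+-\lambda_{2k}^-|)$ remainder. Dividing \eqref{2.39} at $\zeta=\xi_{2k}^+$ by $\xi_{2k}^+$ and solving for $c'(\xi_{2k}^+,\pi)$, the asymptotics $I_{cc}\asymp 1$, $I_{ss}\asymp k^{-2}$, $|I_{cs}|\leq Ck^{-2}$, $|u_+^\bullet(\xi_{2k}^+)|\asymp k^{-1}|\lambda_{2k}^+-\lambda_{2k}^-|$, $|s(\xi_{2k}^+,\pi)|\asymp k^{-2}|\lambda_{2k}^+-\mu_{2k}|$ give
\[
|\lambda_{2k}^+-\nu_{2k}|\leq C\big[|\lambda_{2k}^+-\lambda_{2k}^-|+|\lambda_{2k}^+-\mu_{2k}|+|u_-(\xi_{2k}^+)|\big].
\]
The last term is $\leq Ck^{-1}|\lambda_{2k}^+-\mu_{2k}|^{1/2}|\lambda_{2k}^+-\nu_{2k}|^{1/2}$ by your own starting identity, so after Young's inequality it absorbs into the left-hand side for large $k$; then \eqref{4.22} finishes. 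Your Method B, however, is not a valid argument as written: the entireness of $U$ imposes no constraint on the linear Taylor coefficient of $U^2$ at $4k^2$ (that coefficient is simply $2U(4k^2)U'(4k^2)$, generally nonzero), so the ``vanishing condition'' you invoke does not exist.
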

\begin{proof}
At this point we need to use the enumeration \eqref{3.4b} of all the zeros of 
$s(\cdot,\pi)$, $u_+(\cdot) - 1$, and $u_+^\bullet(\cdot)$, respectively, 
\begin{equation}
\{\zeta_k\}_{k\in\bbZ\backslash\{0\}}, \quad \{\xi_k^\pm\}_{k\in\bbZ},
\quad \{\omega_k\}_{k\in\bbZ}. 
\end{equation}
For all sufficiently large $|k|$ we have 
\begin{align}
\big|u_-(\zeta_k)^2\big| &= \big|u_+(\zeta_k)^2 - 1\big| 
= \big|u_+(\zeta_k)^2 - u_+(\xi_k^+)^2\big| 
\leq 3 |u_+(\zeta_k) - u_+(\xi_k^+)|   \no \\
& \leq 3 \bigg|\int_{\zeta_k}^{\xi_k^+} d\zeta \, u_+^{\bullet}(\zeta)\bigg| 
\leq 3 \bigg|\int_{\zeta_k}^{\xi_k^+} d\zeta 
\int_{\omega_k}^{\zeta} d \zeta' \, u_+^{\bullet \bullet}(\zeta')\bigg|    \no \\
& \leq C \big|\xi_k^+ - \zeta_k\big| 
\max \{|\omega_k - \xi_k^+ |, |\omega_k - \zeta_k|\}.  
\lb{4.24}
\end{align}
Since by \eqref{2.11} and \eqref{2.12},
\begin{equation}
\big\{k \big[\xi_k^+ - \zeta_k\big]\big\}_{k \in \bbZ\backslash\{0\}} 
\in \ell^2(\bbZ\backslash\{0\}), \quad 
\{k [\omega_k - \xi_k^+]\}_{k \in \bbZ} \in \ell^2(\bbZ), 
\lb{4.25}
\end{equation}
one obtains that 
\begin{equation}
\{\zeta_k u_-(\zeta_k)\}_{k \in \bbZ\backslash\{0\}} 
\in \ell^2(\bbZ\backslash\{0\}).     \lb{4.26}
\end{equation}

Because of the representation \eqref{2.12}, the function $\zeta s(\zeta,\pi)$ 
has all properties of a function of sine-type (cf.\ \cite[Lecture\ 22]{Le96}), except that it may possess a finite number of multiple zeros. Thus, following the 
methods in \cite[Lecture\ 22]{Le96}, one can prove that the entire 
function $\zeta u_-(\zeta)$ is representable by the Lagrange--Hermite 
interpolation series (cf.\ \cite{GLO97}) as 
\begin{equation}
\zeta u_-(\zeta) = \sum_{k \in \bbZ} {\rm Res}_{\zeta' = \zeta_k} 
\bigg\{\f{\zeta s(\zeta,\pi) - \zeta' s(\zeta',\pi)}{(\zeta-\zeta') \zeta' s(\zeta',\pi)} 
\zeta' u_-(\zeta')\bigg\}   \lb{4.27} 
\end{equation}
(introducing $\zeta_0 =0$), convergent in the norm of the Paley--Wiener 
space ${\mathbb{PW}_\pi}$. In this context we recall that the Paley--Wiener 
class ${\mathbb{PW}_\pi}$ is defined as the set of all entire functions of exponential type not exceeding $\pi$ satisfying
\begin{equation}
\|f\|^2_{\mathbb{PW}_\pi}=\int_{\R} dx\, |f(x)|^2  <\infty.    \lb{4.27a}
\end{equation}
Since the function $\zeta s(\zeta,\pi)$ may now have a finite number of 
multiple zeros we will add some more remarks concerning the representation  
\eqref{4.27} at the end of this proof and for now assume its validity.

Asymptotically, the zeros of $\zeta s(\cdot,\pi)$ are 
simple and hence for $|k|\in\bbN$ sufficiently large, the $k$th term 
under the sum in \eqref{4.27} is of the form 
\begin{equation}
\f{\zeta s(\zeta,\pi)}{(\zeta - \zeta_k) \zeta_k s^{\bullet}(\zeta_k,\pi)}
\zeta_k u_-(\zeta_k).    \lb{4.28} 
\end{equation}  
Consequently, 
\begin{equation}
\zeta u_-(\zeta) \underset{\zeta \to \zeta_k}{=} \zeta_k u_-(\zeta_k)
[1 + \oh(\zeta - \zeta_k)] + \zeta s(\zeta,\pi) \rho_k(\zeta),    \lb{4.29}
\end{equation}
where 
\begin{equation}
\rho_k(\zeta) = \sum_{j \in \bbN\backslash\{k\}} {\rm Res}_{\zeta' = \zeta_j} 
\bigg\{\f{\zeta' u_-(\zeta')}{(\zeta-\zeta') \zeta' s(\zeta',\pi)}\bigg\}.    \lb{4.30}
\end{equation}
Taking $\zeta = \xi_{2k}^{\pm}$ in \eqref{4.30} then yields for $k\in\bbN$ 
sufficiently large,
\begin{equation}
\big|u_-(\xi_{2k}^{\pm})\big| \leq C |u_-(\zeta_{2k})| 
+ c_k \big|\xi_{2k}^{\pm} s(\xi_{2k}^{\pm},\pi)\big|,    \lb{4.31}
\end{equation}
with 
\begin{equation}
c_k \underset{k\to\infty}{=} \oh(1).     \lb{4.31a}
\end{equation}
Thus, 
\begin{equation}
\bigg|\f{u_-(\xi_{2k}^{\pm})}{u_+^{\bullet} (\xi_{2k}^{\pm})}\bigg| \leq C 
\Bigg[\f{\big[|\xi_{2k}^+ - \zeta_{2k}| |\omega_{2k} 
- \xi_{2k}^+|\big]^{1/2}}{|\xi_{2k}^+ - \xi_{2k}^-|} 
+ \f{\xi_{2k}^{\pm} s(\xi_{2k}^{\pm},\pi)}{\big|\xi_{2k}^+ - \xi_{2k}^-\big|}\Bigg], 
\lb{4.32}
\end{equation}
and hence \eqref{1.19} implies \eqref{4.23}. 

Returning to the Lagrange--Hermite interpolation series \eqref{4.27}, we note 
that alternatively to following the methods in \cite[Lecture\ 22]{Le96}, which ultimately yields convergence of \eqref{4.27} in the ${\mathbb{PW}_\pi}$-norm, one can introduce a new function $\sigma(\cdot)$ that has the appropriate 
number of simple zeros in a small neighborhood of the multiple zeros of 
$\zeta s(\zeta,\pi)$ and otherwise the same simple zeros as the latter, and 
writes \eqref{4.27} with $\zeta s(\zeta,\pi)$ replaced by $\sigma(\zeta)$.The 
absolute convergence of the sum on the right-hand side of \eqref{4.27}, which suffices for our purpose, then follows from the fact \eqref{4.31a} together with 
\eqref{2.11}, \eqref{4.26}, and \eqref{4.27} in terms of $\sigma(\cdot)$, which permits the limiting procedure from $\sigma(\zeta)$ to $\zeta s(\zeta,\pi)$. 
\end{proof}

\begin{proof}[Proof of sufficiency of the conditions in Theorem \ref{t1.2}] 
Since the root systems of the operators $H^P$ and $\big(H^{P}\big)^*$ 
are complete in $L^2([0,\pi]; dx)$ by Theorem \ref{t2.1}, it suffices to 
prove that the systems 
\begin{equation}
\{\phi_k^{\pm}\}_{k \in \bbN_s \cup \bbN_m} \, \text{ and } \, 
\{\chi_k^{\pm}\}_{k \in \bbN_s \cup \bbN_m}    \lb{4.33}
\end{equation}
constructed in Section \ref{s3} satisfy the conditions \eqref{4.1} in 
Theorem \ref{t4.1}. To this end one observes that every function
$\phi_k^{\pm}(x)$, $x \in [0,\pi]$, $k \in \bbN_s'$, is a linear combination 
of the functions
\begin{equation}
c(\xi_{2k}^{\pm},x), \quad c^{\bullet}(\xi_{2k}^{\pm},x), \quad
\xi_{2k}^{\pm} s(\xi_{2k}^{\pm},x), \quad
\xi_{2k}^{\pm} s^{\bullet}(\xi_{2k}^{\pm},x), \quad x \in [0,\pi],
\lb{4.34}
\end{equation}
for which the inequalities in \eqref{4.1} are satisfied. In this context we 
note that the latter functions are of one of the
following forms,
\begin{equation}
\cos(2kx) + k^{-1} f_k(x), \quad \sin(2kx) + k^{-1} g_k(x), \quad x \in 
[0,\pi], \; k \in \bbN,
\end{equation}
with
\begin{equation}
\sup_{k\in\bbN} \big[\|f_k\|_{L^2([0,\pi];dx)} + 
\|g_k\|_{L^2([0,\pi];dx)}\big] < \infty,
\end{equation}
and hence are parts of a Riesz basis in $L^2([0,\pi];dx)$. Consequently, it 
suffices
to verify that the coefficients in these linear combinations remain bounded 
as $k\to\infty$.

For $k\in\bbN_s'$, these coefficients are either given by 
\begin{equation}
\bigg[\f{\xi_{2k}^{\pm} s(\xi_{2k}^{\pm},\pi)}
{u_+^{\bullet}(\xi_{2k}^{\pm})}\bigg]^{1/2} \, \text{ and } \, 
\bigg[\f{u_-(\xi_{2k}^{\pm})^2}
{\xi_{2k}^{\pm} s(\xi_{2k}^{\pm},\pi) 
u_+^{\bullet}(\xi_{2k}^{\pm})}\bigg]^{1/2}, \quad k \in \bbN_s',   \lb{4.35}
\end{equation}
or their complex conjugates. The first coefficient in \eqref{4.35} is bounded since 
\begin{equation}
\f{\xi_{2k}^{\pm} s(\xi_{2k}^{\pm},\pi)}{u_+^{\bullet}(\xi_{2k}^{\pm})} 
\underset{k\to\infty}{=} 
\f{2 \xi_{2k}^{\pm} s^{\bullet}(\zeta_{2k},\pi) (\xi_{2k}^{\pm} - \zeta_{2k})}
{u_+^{\bullet \bullet}(\omega_{2k}) (\xi_{2k}^+ - \xi_{2k}^-)}[1 + \oh(1)]    \lb{4.36}
\end{equation} 
and \eqref{1.19} holds. In this context we recall that 
\begin{equation}
u_+^{\bullet\,\bullet}(\omega_k) \underset{k\to\infty}{=} - \pi^2 + \oh(1). \lb{4.36a}
\end{equation}

To estimate the second coefficient in \eqref{4.35} we 
use the representation \eqref{4.29} in the form
\begin{equation}
u_-(\zeta) \underset{k \to \infty}{=} u_-(\zeta_{2k}) [1 + \oh(1)]  
+ \zeta s(\zeta,\pi) \oh(1),    \lb{4.37} 
\end{equation}
uniformly with respect to $\zeta$ in a sufficiently small disk around 
$\xi_{2k}^{\pm}$ of fixed (i.e., $k$-independent) radius. Equation \eqref{4.37} 
then implies 
\begin{equation}
|u_-(\xi_{2k}^\pm)|\leq 2 |u_-(\zeta_{2k})| +|\xi_{2k}^\pm s(\xi_{2k}^\pm ,\pi)|.
\lb{4.38}
\end{equation}
Moreover, 
\begin{align}
\big|u_-(\zeta_{2k})^2\big| &= \big|1 - u_+(\zeta_{2k})^2\big| 
\leq 3 |u_+(\xi_{2k}^{\pm}) - u_+(\zeta_{2k})| 
\leq 3 \bigg|\int_{\zeta_{2k}}^{\xi_{2k}^{\pm}} d\zeta \, u_+^{\bullet \bullet}(\zeta) 
(\xi_{2k}^{\pm} - \zeta)\bigg|    \no \\
& \leq C |\xi_{2k}^{\pm} - \zeta_{2k}|^2.    \lb{4.39}
\end{align}
Combining Lemma \ref{l4.2} with the estimates \eqref{4.38} and \eqref{4.39} 
one also concludes that the second coefficient in \eqref{4.35} is bounded for 
$k \in \bbN_s'$. 

For $k\in\bbN_s''$, the coefficient 
$\big[-2 /[\xi_{2k}^+ s^{\bullet}(\xi_{2k}^+,\pi)]\big]^{1/2}$ in $\phi_k^+$ 
multiplying $\xi_{2k}^+ s(\xi_{2k}^+,\cdot)$ in 
\eqref{3.20} is bounded by \eqref{2.11}. Regarding $\phi_k^-$, $k\in\bbN_s''$, 
its coefficients are bounded as in the case $k\in\bbN_s'$, treated above. 

Hence, it remains to consider the coefficients of $\phi_k^{\pm}$, represented 
as linear combinations of the functions in \eqref{4.34}, for $k \in\bbN_m$. In this 
context it suffices to observe that for $k\in\bbN_m$, one has 
\begin{equation}
s(\xi_{2k}^{\pm},\pi) \underset{k\to\infty}{=} s^{\bullet}(\zeta_{2k})
\big(\xi_{2k}^{\pm} - \zeta_{2k}\big) [1 + \oh(1)],    \lb{4.40}
\end{equation} 
and 
\begin{equation}
|u_-(\xi_{2k}^{\pm})| \leq |u_-(\zeta_{2k})| + |u_-(\xi_{2k}^{\pm}) - u_-(\zeta_{2k})| 
\leq C \big|\xi_{2k}^{\pm} - \zeta_{2k}\big|.     \lb{4.41}
\end{equation}
Thus, the fraction 
$u_-(\xi_{2k}^{\pm})\big/\big[\xi_{2k}^{\pm} s(\xi_{2k}^{\pm},\pi)\big]$, 
multiplying $\xi_{2k}^{\pm} s(\xi_{2k}^{\pm},\cdot)$ 
in \eqref{3.33} and \eqref{3.35}, is bounded with respect 
to $k\in\bbN_m$. The boundedness of the remaining coefficients in the linear 
combinations representing $\phi_k^{\pm}$ and $\chi_k^{\pm}$ is evident from 
our considerations thus far, in particular, 
$|u_+^{\bullet \, \bullet} (\xi_{2k}^\pm)|$ in \eqref{3.25a} is bounded from 
below by \eqref{4.36a}. This completes the proof of the sufficiency part of 
Theorem \ref{t1.2}. 
\end{proof}

\section{The Proof of Theorem \ref{t1.4}} \lb{s5}

In our final section we prove the Schauder basis results in connection with 
$L^p([0,\pi];dx)$, $p \in (1,\infty)$,  stated in Section \ref{s1}.  

Let $\cB$ denote a complex, separable Banach space and denote by $\cB^*$ its 
conjugate dual space. We recall that a system of vectors 
$\{g_k\}_{k\in\bbN} \subset \cB$ 
is called {\it complete} in $\cB$ if $\ol{\text{lin.\,span} \, \{g_k\}_{k \in \bbN}} = \cB$. 
Moreover (as in the Hilbert space context), a system $\{h_k\}_{k \in \bbN} \subset \cH$ 
is called {\it minimal} in $\cB$ if no vector $h_{k_0} \in \{h_k\}_{k \in \bbN}$ satisfies 
$h_{k_0} \in \ol{\text{lin.\,span} \, \{h_k\}_{k \in \bbN\backslash \{k_0\}}}$. 

A system $\{h_k, \ell_k\}_{k \in\bbN}$, $h_j \in\cB$, $\ell_j \in \cB^*$, $j\in\bbN$, is called 
{\it biorthogonal} if
\begin{equation}
\ell_j(h_k) =0, \quad j \neq k, \; j,k \in \bbN,
\end{equation}
and {\it biorthonormal} if 
\begin{equation}
\ell_j(h_k) = \delta_{j,k}, \quad j,k \in \bbN. 
\end{equation}
The system $\{\ell_k\}_{k \in\bbN}$ is then called {\it biorthonormal} (or {\it dual\,}) to 
$\{h_k\}_{k \in\bbN}$. 
In general, such a biorthonormal system $\{\ell_k\}_{k \in\bbN}$ is nonunique. However, if 
$\{h_k\}_{k \in\bbN}$ is complete in $\cB$, then its biorthogonal $\{\ell_k\}_{k \in\bbN}$ 
is unique, if it exists. In this context we also mention that a given system 
$\{g_k\}_{k\in\bbN} \subset \cB$ has a biorthonormal system 
$\{\ell_k\}_{k \in\bbN} \in \cB^*$ if and only if $\{g_k\}_{k\in\bbN}$ is minimal.

As in the Hilbert space context considered in the bulk of this paper, the 
system $\{f_k\}_{k\in\bbN} \subset \cB$ is called a {\it Schauder basis} in $\cB$
\begin{align}
\begin{split}
& \text{if for each $f \in \cB$, there exists unique $c_k = c_k(f) \in\bbC$, $k \in \bbN$, 
such that}     \lb{6.1} \\
& \quad \text{$f = \sum_{k \in \bbN} c_k(f) f_k$ converges in the norm of $\cB$.} 
\end{split} 
\end{align} 

One recalls (cf., \cite[Sects.\ 1.2,\,1.4]{KS89}) that $\{f_k\}_{k\in\bbN} \subset \cB$ is a Schauder 
basis in $\cB$, if and only if the following three conditions hold:
\begin{align}
& (i) \;\;\, \{f_k\}_{k\in\bbN} \, \text{ is complete in $\cB$,}     \\
& (ii) \;\, \{f_k\}_{k\in\bbN} \, \text{ is minimal in $\cB$,}     \\
& (iii) \; \text{then there exists a unique biorthonormal system 
$\{l_k\}_{k\in\bbN} \subset \cB^*$ and a} \no \\
& \qquad \, \text{constant $C>0$ such that for all $N\in\bbN$,}   \no \\
& \hspace*{8mm} 
\bigg\|\sum_{k=1}^N \ell_k(f) f_k\bigg|_{\cB} \leq C \|f\|_{\cB}, \quad f \in \cB. 
\lb{6.x} 
\end{align}
 
In the following, for $f \in L^p([0,\pi];dx)$ and $g \in L^q([0,\pi]; dx)$, with 
$p, q \in (1,\infty)$, $(1/p) + (1/q) = 1$, we introduce the functional 
\begin{equation}
g(f) = \int_0^{\pi} dx\, \ol{g(x)} f(x), 
\end{equation}  
linear with respect to $f$ and antilinear in $g$.  

Finally, we recall that if $\{\psi_k(\cdot)\}_{k\in\bbN}$ is a Schauder basis in 
$L^p([0,\pi]; dx)$, $p \in (1,\infty)$, then its biorthonormal system 
$\{\eta_k(\cdot)\}_{k\in\bbN}$ is a basis in $L^q([0,\pi]; dx)$, where $(1/p)+(1/q)=1$. 
 
\begin{proof}[Proof of necessity of the conditions in Theorem \ref{t1.4} 
for $1<p\leq 2$.] ${}$ \\
In analogy to the case $p=2$, if there exists a Schauder basis of root vectors of 
$H^P$ in $L^p([0,\pi]; dx)$, then the system 
$\{\phi_k^{\pm}\}_{k\in\bbN_s} \subset L^p([0,\pi]; dx)$ is a part 
of a Schauder basis of $H^P$ as well, with corresponding biorthogonal system 
$\{\chi_k^{\pm}\}_{k\in\bbN_s} \subset L^q([0,\pi]; dx)$. Thus, by \eqref{6.x} 
there exists a constant $C>0$ such that 
\begin{equation}
\|\chi_k^{\pm} (f) \phi_k^{\pm}\|_{L^p([0,\pi]; dx)} \leq C \|f\|_{L^p([0,\pi]; dx)}, \quad 
f \in L^p([0,\pi]; dx), \; k \in \bbN_s,    \lb{6.y}    
\end{equation}
holds.

For $k\in\bbN_s'$, we still define the functions $f_k^{\pm}$ by \eqref{4.10}, 
\eqref{4.11} and find that \eqref{4.13} and \eqref{4.15} continue to hold. 
However, instead of \eqref{4.14} one now obtains for some constant $C_r>0$, 
\begin{align}
& \|c(\xi_{2k}^{\pm},\cdot)\|_{L^r([0,\pi]; dx)} 
= \bigg(\int_0^{\pi} dx \, |\cos(\xi_{2k}^{\pm} x)|^r\bigg)^{1/r} + \Oh(k^{-1})  \no \\
& \quad 
= \bigg(\int_0^{\pi} dx \, |\cos(2k x)|^r\bigg)^{1/r} + \Oh(k^{-1}) \leq C_r,  
\quad r \in (1,\infty),    \\
& \|\xi_{2k}^{\pm} s(\xi_{2k}^{\pm},\cdot)\|_{L^r([0,\pi]; dx)} \leq C_r, \quad r \in (1,\infty). 
\end{align}
An application of \eqref{4.10} and \eqref{4.11}  then yields 
\begin{equation}
\|f_k^{\pm}\|_{L^r([0,\pi]; dx)} \leq C_r, \quad r \in (1,\infty),
\end{equation}
which permits one to consider $f_k^{\pm}$ as elements of 
$L^q([0,\pi]; dx) = L^p([0,\pi]; dx)^*$. By \eqref{6.y} one concludes that 
\begin{align}
& \big|\chi_k^{\pm} (f_k^{\pm}) \ol{f_k^{\pm}} (\phi_k^{\pm})\big| 
= \big|\ol{f_k^{\pm}} (\chi_k^{\pm} (f_k^{\pm})\phi_k^{\pm})\big|  \no \\
& \quad \leq C \|\chi_k^{\pm} (f_k^{\pm}) \phi_k^{\pm}\|_{L^p([0,\pi]; dx)} 
\|f_k^{\pm}\|_{L^q([0,\pi]; dx)} \leq C.
\end{align}
Hence one obtains
\begin{align}
C &\geq \bigg|\int_0^{\pi} dx \, \ol{f_k^{\pm}(x)} \chi_k^{\pm} (x)\bigg|
\bigg|\int_0^{\pi} dx \, f_k^{\pm}(x) \phi_k^{\pm} (x)\bigg|   \no \\
& = |\xi_{2k}^{\pm} s(\xi_{2k}^{\pm},x)/ u_+^{\bullet} (\xi_{2k}^{\pm})|
\bigg|\int_0^{\pi} dx \, f_k^{\pm}(x) c(\xi_{2k}^{\pm},x)\bigg|    \no \\
& \geq D |\xi_{2k}^{\pm} s(\xi_{2k}^{\pm},x)/ u_+^{\bullet} (\xi_{2k}^{\pm})| \lb{6.15}
\end{align}
for some constant $D>0$. This proves \eqref{4.18}. The same arguments yield 
 \eqref{6.15} for $k \in \bbN_s''$. To complete the proof of 
necessity of the conditions in Theorem \ref{t1.4} for $1<p\leq 2$ it now suffices 
to employ \eqref{4.20}, \eqref{4.21}.
\end{proof}

\begin{proof}[Proof of sufficiency of the conditions in Theorem \ref{t1.4} 
for $1<p\leq 2$.] ${}$ \\
We need to prove that for all $f \in L^p([0,\pi]; dx)$, 
\begin{equation}
f(\cdot) = \sum_{k\in\bbN_0} \bigg(\int_0^{\pi} dy \, \ol{\chi_k^{\pm}(y)} f(y)\bigg)
\phi_k^{\pm}(\cdot),   \lb{6.z}
\end{equation}
converges in the norm of $L^p([0,\pi]; dx)$. 

We showed in Section \ref{s4}, that $\phi_k^{\pm}$ and $\chi_k^{\pm}$ are linear combinations of the functions in \eqref{4.34} with coefficients bounded as 
$k\to\infty$. The standard Volterra integral equations
\begin{align}
c(\zeta,x) &= \cos(\zeta x) + \int_0^x dx' \, \zeta^{-1} \sin(\zeta (x-x')) V(x') c(\zeta,x'), \\
\zeta s(\zeta,x) &= \sin(\zeta x) + \int_0^x dx' \, \zeta^{-1} \sin(\zeta (x-x')) V(x') 
\zeta s(\zeta,x'), \\
& \hspace*{5.4cm} \zeta \in \bbC, \; x \in [0,\pi],   \no 
\end{align}
combined with the asymptotic formulas \eqref{3.4a} show that each function in 
\eqref{4.34} is of the form
\begin{equation}
x^{\alpha} \cos(2kx) + \f{\tau_{k,\alpha}^{\pm}(x)}{k}, \quad 
x^{\beta} \sin(2kx) + \f{\rho_{k,\beta}^{\pm}(x)}{k},  \quad \alpha, \beta \in \{0,1\}, 
\; x \in [0,\pi],  
\end{equation}
where
\begin{equation}
\sup_{k \in \bbN, \, \alpha, \beta \in \{0,1\}} \sup_{x\in [0,\pi]}
\big[\big|\tau_{k,\alpha}^{\pm}(x)\big| + \big|\rho_{k,\beta}^{\pm}(x)\big|\big] < \infty.
\end{equation}

Consequently, \eqref{6.z} can be split into finitely many terms of the following type:
\begin{align}
& x^\beta \sum_{k\in\bbN} c_k^{(1)} \bigg(\int_0^{\pi} dy \, y^{\alpha} f(y) 
a_{k,1}(y)\bigg) b_{k,1}(x),    \lb{6.a} \\
& x^\beta \sum_{k\in\bbN} c_k^{(2)} \f{1}{k} \bigg(\int_0^{\pi} dy \, f(y) 
c_{k,2}(y)\bigg) b_{k,2}(x),    \lb{6.b} \\
& \sum_{k\in\bbN} c_k^{(3)} \f{1}{k} \bigg(\int_0^{\pi} dy \, y^{\alpha} f(y) 
a_{k,3}(y)\bigg) c_{k,3}(x),    \lb{6.c} \\
& \sum_{k\in\bbN} c_k^{(4)} \f{1}{k^2}\bigg(\int_0^{\pi} dy \, f(y) 
c_{k,4}(y)\bigg) d_{k,4}(x),    \lb{6.d} 
\end{align}
where $\alpha, \beta \in \{0,1\}$, the functions $a_{k,j}$ and $b_{k,j}$ coincide with 
one of the functions $\cos(2kx)$ and $\sin(2kx)$, and the functions $c_{k,j}$ and 
$d_{k,j}$ coincide with one of the functions $\tau_{k,\alpha}^{\pm}$ and 
$\rho_{k,\beta}^{\pm}$. In addition, as a consequence of \eqref{1.19}, 
\begin{equation}
\{c_k^{(j)}\}_{k\in\bbN} \in \ell^{\infty}(\bbN), \quad 1\leq j \leq 4.
\end{equation}

According to part $(i)$ of the Hausdorff--Young theorem in 
\cite[Theorem\ XII.2.3]{Zy90}), for every $f \in L^p([0,\pi];dx)$, one has
\begin{equation}
\bigg\{\int_0^{\pi} dy \, y^{\alpha} f(y) e^{in y}\bigg\}_{n\in\bbZ} \in \ell^q(\bbZ), 
\quad \f{1}{p} + \f{1}{q} =1.
\end{equation} 
Moreover, it follows from part $(ii)$ of  \cite[Theorem\ XII.2.3]{Zy90} that the estimate 
\begin{align}
\begin{split}
& \bigg\|\sum_{k=m}^n c_k^{(1)} \bigg(\int_0^{\pi} dy \, y^{\alpha} f(y) 
a_{k,1}(y)\bigg) b_{k,1}(\cdot)\bigg\|_{L^p([0,\pi];dx)}    \\
& \quad \leq C\bigg(\sum_{k=m}^n \bigg|\int_0^{\pi} dy \, y^{\alpha} f(y) 
a_{k,1}(y)\bigg|^q\bigg)^{1/q}
\end{split} 
\end{align}
holds for all $m, n \in \bbZ$, and hence the series \eqref{6.a} converges in $L^p([0,\pi];dx)$.

In addition, the series \eqref{6.b} converges in $L^2([0,\pi];dx)$, 
and it is easy to see that \eqref{6.c} as well as \eqref{6.d} converge uniformly to continuous functions on $[0,\pi]$. 

Thus, the series on the right-hand side of \eqref{6.z} converges in $L^p([0,\pi];dx)$ 
and equality in \eqref{6.z} now follows from completeness of the system 
$\{\phi_k^{\pm}\}_{k\in\bbN_0}$ in $L^p([0,\pi];dx)$. 
\end{proof}

\begin{proof}[Proof of Theorem \ref{t1.4} 
for $2 \leq p < \infty$.] ${}$ \\
Then $1 \leq q =p/(p-1) \leq 2$ and 
Theorem \ref{t1.4} applies to the operator $\big(H^P\big)^*$ in the space $L^q([0,\pi];dx)$. 
Since this operator is generated by the complex conjugate potential $\ol{V(\cdot)}$ and 
the periodic and antiperiodic as well as Dirichlet boundary conditions are all self-adjoint, 
condition \eqref{1.19} for $H^P$ and $\big(H^P\big)^*$ coincide. At the same time, the 
system of root vectors of $H^P$ contains a Schauder basis if and only if the system of 
root vectors of $\big(H^P\big)^*$ contains a Schauder basis. Since the system 
$\{\phi_k^{\pm}\}_{k\in\bbN_0}$ is dual to $\{\chi_k^{\pm}\}_{k\in\bbN_0}$, this proves Theorem \ref{t1.4} for $p \in [2,\infty)$.
\end{proof}

\section{Some Remarks} \lb{s6}

In this section we briefly further illustrate the principal result of this paper:

\begin{remark} \lb{r6.1} 
Starting with the pioneering works by Birkhoff and Tamarkin (cf. the
discussion in [60]), almost all results related to eigenfunction expansions generated
by ordinary differential operators were obtained within the framework of direct
spectral theory. For instance, in the papers \cite{DM03a} and \cite{DM09},  necessary 
and sufficient conditions for the Riesz property of systems of eigenfunctions 
were found for classes of two- and four-term trigonometric potentials. These conditions 
are explicitly stated in terms of the coefficients of the polynomials (see also \cite{Ma06b}, 
where the example of a two-term trigonometric potential is discussed near the end). In 
\cite{SV09} a specific system of root vectors corresponding to a generic potential in the space $L^2([0,\pi];dx)$ was introduced, and a criterium for it to form a Riesz basis was proved in terms of the Fourier coefficients of the eigenfunctions (i.e., in somewhat less explicit terms).  

Having in mind the periodic/antiperiodic boundary problems for
Schr\"odinger operators only, we note that most results in this context were obtained
under assumptions which restrict the smoothness properties of potentials,
see, for instance, \cite{DV05}, \cite{KM98}, \cite{Ki09}, \cite{Ku06}, \cite{Ma06a}, 
\cite{Ma06c}, \cite{Ma10}, \cite{MM08}, and \cite{MM10}. Such assumptions are attractive, 
since they are expressed in terms of explicit properties of the potential. However, in
spite of sometimes rather involved eigenfunction constructions, they did not result
in a criterium, that is, necessary and sufficient conditions for the desired basis
property of the root systems of operators with {\it generic} potentials. It is worth noting
that the above-mentioned smoothness restrictions are redundant in connection with 
analytic and $C^k$-potentials, even in the self-adjoint situation where the Riesz property always holds. In sharp contrast to what has just been described, in our approach, we treat the problem of Riesz and Schauder bases within the framework of inverse spectral theory. Our aim was three-fold:

$\bullet$ First, to deal with the Riesz property of the root system of the operators 
$H^P$ and $H^{AP}$ with an arbitrary potential $V \in L^2([0,\pi];dx)$, with no 
restrictions on its form,  no smoothness properties, and no analyticity assumptions.

$\bullet$ Second, to obtain necessary and sufficient conditions for the existence of Riesz and Schauder bases in terms of spectral data which permit one to construct (or reconstruct)  a potential $V$ in a one-to-one manner. 

$\bullet$ Third, to establish the Riesz and property of at least one root system without  restricting ourselves to a specific choice of its elements.
 
The spectral data adequate for our purpose were proposed in \cite{ST96a} and 
\cite{Tk01}. These data consist of the functions $u_+(\cdot)$ and $s(\cdot,\pi)$ 
or, alternatively, of the periodic/antiperiodic spectra 
$\{\lambda_0^+, \lambda_{2k}^+, \, \lambda_{2k}^-\}_{k\in\bbN}$,      
$\{\lambda_{2k+1}^+, \, \lambda_{2k+1}^-\}_{k\in\bbN_0}$, and the Dirichlet spectra,  
$\{\mu_j\}_{j\in\bbN}$, respectively. As shown in \cite{ST96} and \cite{Tk92}, these two 
sets of data represent independent parameters which uniquely determine the potential 
$V  \in L^2([0,\pi];dx)$, in particular, we recall that the precise properties of these two 
sets of data, as implied by the condition $V \in L^2([0,\pi];dx)$, were recorded in  
\cite{ST96} and \cite{Tk92}. 

It follows from \cite{ST96} and \cite{Tk92} that for arbitrary positive sequences 
\begin{equation}
\{\alpha_n\}_{n\in\bbN}, \, \{\beta_n\}_{n\in\bbN} \in \ell^2(\bbN),
\end{equation}  
there exists a potential $V \in L^2([0,\pi];dx)$ such that the conditions 
\begin{equation}
\lim_{n\to\infty}|\lambda_n^+ - \lambda_n^-| \alpha_n^{-1}
= \lim_{n\to\infty}|\lambda_n^- - \mu_n| \beta_n^{-1} = 1 
\end{equation}
are satisfied. As shown in \cite{ST96a} and \cite{Tk01}, this potential may have arbitrary (e.g., fractional) smoothness, or even be an analytic function, while the Riesz basis 
property of its system of eigenfunctions depends on the existence of the finite limit
$\limsup_{n\to\infty}\, (\beta_n/\alpha_n)$. Consequently, the upper and lower 
estimates on the smoothness of $V$ are not necessary for the Riesz and
Schauder basis properties of the root system of $H^P$ and $H^{AP}$, and hence are dictated by the methods used in \cite{DV05}, \cite{KM98}, \cite{Ki09}, \cite{Ku06}, 
\cite{Ma06a}, \cite{Ma06c}, \cite{Ma10}, \cite{MM08}, and \cite{MM10}. The sufficiency 
of the conditions imposed on potentials in the latter papers follows from 
\cite[Theorems\ 1.51, 1.52]{Ma86} (the latter describe the asymptotic behavior of 
$\{\lambda_n^\pm\}\, \text{and}\, \{\mu_j\}$) and our Theorems \ref{t1.2} and \ref{t1.4}. 

Clearly, the direct and inverse spectral approach have their advantages and 
disadvantages, and the interested reader now has the possibility of a choice between 
these two approaches.
\end{remark} 

\begin{remark} \lb{r6.2}
In the special self-adjoint case, where $V\in L^2([0,\pi]; dx)$ is in addition real-valued, standard oscillation theory implies that 
\begin{equation}
\mu_k \in \big[\min(\lambda_{k}^-, \lambda_{k}^+), 
\max(\lambda_{k}^-, \lambda_{k}^+)\big], \quad k \in \bbN,    \lb{5.1}
\end{equation}
(cf., e.g., \cite[Sect.\ 3.4]{Ma86}). Thus, 
\eqref{1.19} and \eqref{1.20} are of course satisfied (as they must be on 
abstract grounds since the system of eigenvectors for any self-adjoint 
operator in $\cH$ with purely discrete spectrum forms an orthonormal basis 
in $\cH$). 
\end{remark}

\begin{remark} \lb{r6.3} 
It follows from Theorem \ref{t1.2}, which is an improved version of the statement 
in \cite[Remark\ 8.10]{GT09}, that if \eqref{1.19} and \eqref{1.20} are satisfied,  
then the root system of {\it every} operator $H(t)$, $t\in[0,2\pi]$, defined in 
\eqref{1.22} contains a Riesz basis. However, we showed in \cite{GT06}, 
\cite{GT09} that this property may not be uniform with respect to $t\in[0,2\pi]$. More precisely, we constructed a potential $V\in L^2([0,\pi]; dx)$ such that 
\eqref{1.19} and \eqref{1.20} are valid, but the family of constants $C=C(t)$ 
in \eqref{4.1} corresponding to the family $H(t)$ is not bounded with respect to 
$t\in[0,2\pi]$. As a result, the corresponding operator $H$ in \eqref{1.25} is not a spectral operator of scalar type in the sense of Dunford (cf.\ 
\cite[Sect.\ XVIII.2]{DS88}). Nevertheless, every Hill operator with a complex-valued locally square-integrable potential is an operator with a separable spectrum as defined by Lyubich and Matsaev \cite{LM60}, \cite{LM62}. 
\end{remark}

\begin{remark} \lb{r6.4} Gasymov showed in \cite{Ga80} (cf.\ also \cite{Ga80a}, \cite{Sh03}) 
 that if
\begin{equation}
V(x)=\sum_{n=1}^\infty\ c_ne^{2inx}, \quad
\{c_n\}_{n\in\bbN}\in \ell^1(\bbN), \;\, x\in [0,\pi],  \lb{5.4}
\end{equation}
then $u_+(\zeta)=\cos(\zeta \pi)$. Thus, in this case
$\sigma(H) =[0,\infty)$ and hence $\bbN_s = \emptyset$. Consequently, the condition \eqref{1.19} (resp.\ \eqref{1.20}) in Theorem \ref{t1.2} is obviously satisfied and hence any operator $H^P$ and $H^{AP}$ associated with a 
potential in the Gasymov class \eqref{5.4} possesses a Riesz basis of root 
vectors in $L^2([0,\pi]; dx)$. To the best of our knowledge, this appears to 
be a new observation. 

However, one notes that the function
\begin{equation}
\frac{\phi(\zeta,\pi)}{u_+^{\bullet}(\zeta)}
=-\frac{2 \zeta \phi(\zeta,\pi)}{\pi\sin(\zeta \pi)}      \lb{5.5}
\end{equation}
is analytic in an open neighborhood of $\sigma(H)$ if and only if
$\phi(\zeta,\pi)=\sin(\zeta \pi)/\zeta$. In the latter case
$u_-(\cdot)\equiv 0$ and $V(x)=0$ for a.e.\ $x\in\bbR$. As discussed in 
\cite{GT06}, \cite{GT09}, this implies that
no smoothness or analyticity conditions imposed on a periodic potential $V$ 
on $\bbR$ can guarantee that a Hill operator $H$ in $L^2(\bbR; dx)$ as 
in \eqref{1.25}, \eqref{1.26} is a spectral operator of scalar type. 

For every operator $H$ in $L^2(\bbR; dx)$ with a nontrivial potential 
\eqref{5.4} on $\bbR$ there
exists at least one integer $n_0\in\N$ such that $\phi(n_0^2,\pi)\neq0$ 
and the point $n_0^2$ is then a spectral singularity. Still, as observed above, 
all potentials in the Gasymov class yield operators $H^P$ and $H^{AP}$ which 
possess a Riesz basis of root vectors in $L^2([0,\pi]; dx)$. 
\end{remark}

\begin{remark} \lb{r6.5} 
In conclusion, we note that there are a variety of ways to fix two vectors 
representing the root subspace corresponding to an eigenvalue 
$\xi^+_{2k} = \xi^-_{2k}$ of algebraic multiplicity two (e.g., for $k\in\bbN$ 
sufficiently large) in a Riesz basis. (For example, any orthonormal pair will 
do for this purpose.) In Section \ref{s3} we demonstrated how to pick such 
a pair based on a study of the singularity structure of the resolvent of the 
operator $H^P$. 
\end{remark}

\medskip

\noindent {\bf Acknowledgments.}
We are indebted to Yura Lyubarskii for helpful comments. V.\ Tkachenko gratefully acknowledges the award of a Miller Scholarship  
from the Department of Mathematics of the University of Missouri, Columbia, USA. 
He is indebted to the department for its great hospitality during his stay in the 
month of October 2010. He also gratefully acknowledges partial 
support by the Israel Science Foundation under grant 125/09. 

\bigskip


\end{document}